\newtheorem{theorem}{Theorem}[section]
\newtheorem{lemma}[theorem]{Lemma}
\newtheorem{proposition}[theorem]{Proposition}
\newtheorem{corollary}[theorem]{Corollary}
\theoremstyle{definition}
\theoremstyle{remark}
\newtheorem{remark}[theorem]{Remark}
\numberwithin{equation}{section}
\begin{document}

\setcounter{page}{1}

\title[Some extensions of Berezin number inequalities ]{Some extensions of Berezin number inequalities on operators}

\author[M. Bakherad$^{*}$ , M. Hajmohamadi, R. Lashkaripour, \MakeLowercase{and} S. Sahoo]{Mojtaba Bakherad$^1$, Monire Hajmohamadi$^2$, Rahmatollah Lashkaripour$^3$   \MakeLowercase{and} Satyajit Sahoo$^4$ }

\address{$^{1,2,3}$Department of Mathematics, University of Sistan and Baluchestan, Zahedan, I.R.Iran. $^{4}$ P.G. Department of Mathematics, Utkal University, Vanivihar, Bhubaneswar-751004, India}
\email{ mojtaba.bakherad@yahoo.com and monire.hajmohamadi@yahoo.com,}
\email{  lashkari@hamoon.usb.ac.ir and satyajitsahoo2010@gmail.com}


\subjclass[2010]{Primary 47A63; Secondary 15A60}

\keywords{Berezin number; Berezin symbol; diagonal operator matrices, off-diagonal part.}


\begin{abstract}
In this paper, we establish some upper bounds for Berezin number inequalities including of $2\times 2$ operator
matrices and their off-diagonal parts. Among other inequalities, it is shown that if $T=\left[\begin{array}{cc}
 0&X\\
 Y&0
 \end{array}\right]$, then
 \begin{align*}
 \textbf{ber}^{r}(T)&\leq 2^{r-2}\left(\textbf{ber}(f^{2r}(|X|)+g^{2r}(|Y^*|))+\textbf{ber}(f^{2r}(|Y|)+g^{2r}(|X^*|))\right)\\&
 \,\,\,\,\,\,\,\,-2^{r-2}\inf_{\|(k_{\lambda_{1}},k_{\lambda_{2}})\|=1} \eta(k_{\lambda_{1}},k_{\lambda_{2}}),
 \end{align*}
 where
 {\footnotesize\begin{align*}
& \eta (k_{\lambda_{1}},k_{\lambda_{2}})=\\&\left(\left\langle(f^{2r}(|X|)+g^{2r}(|Y^*|)\right)k_{\lambda_{2}},k_{\lambda_{2}}\right\rangle^\frac{1}{2}-\left\langle \left(f^{2r}(|Y|)+g^{2r}(|X^*|)\right)k_{\lambda_{1}},k_{\lambda_{1}}\right\rangle^\frac{1}{2})^2,
\end{align*}}
 $X, Y$ are bounded linear operators on a Hilbert space $\mathcal H=\mathcal H(\Omega)$, $r\geq 1$ and $f$, $g$ are nonnegative  continuous  functions on $[0, \infty)$
 satisfying the relation $f(t)g(t)=t\,(t\in[0, \infty))$.

2010 Mathematics Subject Classification. Primary 47A30; Secondary 47A12, 15A60, 30E20.
\end{abstract} \maketitle

\section{Introduction and preliminaries}
A functional Hilbert space $\mathcal H=\mathcal H(\Omega)$ is a Hilbert space of complex valued functions on a (nonempty) set $\Omega$, which has the property that point evaluations are continuous i.e. for each $\lambda\in \Omega$ the map $f\mapsto f(\lambda)$ is a continuous linear functional on $\mathcal H$. The Riesz representation theorem ensure that for each $\lambda\in \Omega$ there is a unique element $k_{\lambda}\in \mathcal H$ such that $f(\lambda)=\langle f,k_{\lambda}\rangle$, for all $f\in \mathcal H$. The collection $\{k_{\lambda} : \lambda\in \Omega\}$ is called the reproducing kernel of $\mathcal H$. If $\{e_{n}\}$ is an orthonormal basis for a functional Hilbert space $\mathcal H$, then the reproducing kernel of $\mathcal H$ is given by $k_{\lambda}(z)=\sum_n\overline{e_{n}(\lambda)}e_{n}(z)$; . For $\lambda\in \Omega$, let $\hat{k_{\lambda}}=\frac{k_{\lambda}}{\|k_{\lambda}\|}$ be the normalized reproducing kernel of $\mathcal H$. For a bounded linear operator $A$ on $\mathcal H$, the function $\widetilde{A}$ defined on $\Omega$ by $\widetilde{A}(\lambda)=\langle A\hat{k_{\lambda}},\hat{k_{\lambda}}\rangle$ is the Berezin symbol of $A$, which firstly have been introduced by Berezin . The Berezin set and the Berezin number of the operator A are defined by
 \begin{align*}
 \textbf{Ber}(A):=\{\widetilde{A}(\lambda): \lambda\in \Omega\} \qquad \textrm{and} \qquad \textbf{ber}(A):=\sup\{|\widetilde{A}(\lambda)|: \lambda\in\Omega\},
 \end{align*}
respectively. In some recent papers,  several Berezin number inequalities have been investigated by authors
.
The Berezin number of  operators $A$ and $B$ satisfies the properties;
$\textbf{ ber}(\alpha A)=|\alpha|\textbf{ber}(A)\,\,(\alpha\in \mathbb C)$, and
$\textbf{ber}(A+B)\leq \textbf{ber}(A)+\textbf{ber}(B)$ and $\textbf{ber}(A)\leq\|A\|$, where $\|.\|$ is the operator norm.\\
Throughout this paper, the operator matrix $T=\left[\begin{array}{cc}
 S&X\\
 Y&R
 \end{array}\right]$ is a matrix, which $S\in{\mathcal B}({\mathcal H_1}), X\in{\mathcal B}({\mathcal H_2,\mathcal H_1})$, $Y\in{\mathcal B}({\mathcal H_1,\mathcal H_2})$ and $R\in{\mathcal B}({\mathcal H_2})$.\\
The authors in  showed an upper bound for off-diagonal part of operator matrix as follows:
\begin{align}
(\textbf{ber}(T))^{r}\leq \frac{1}{4}\|h(f^{2}(|Y|))+h(g^{2}(|Y|))\|+\frac{1}{4}\|h(f^{2}(|X|))+h(g^{2}(|X|))\|,
\end{align}
where $T=\left[\begin{array}{cc}
 0&X\\
 Y&0
 \end{array}\right]$ and  $f$, $g$ are nonnegative  continuous  functions on $[0, \infty)$ such that $f(t)g(t)=t\,\,(t\in [0, \infty))$.\\

 The classical Young inequality says that if $p, q>1$ such that $\frac{1}{p}+\frac{1}{q}=1$, then $ab\leq \frac{a^{p}}{p}+\frac{b^{q}}{q}$ for positive real numbers $a, b$. In, the authors showed that a refinement of the scalar Young inequality  as follows:
 \begin{align*}
 \left(a^{\frac{1}{p}}b^{\frac{1}{q}}\right)^{m}+r_{0}^{m}\left(a^{\frac{m}{2}}-b^{\frac{m}{2}}\right)^{2}\leq\left(\frac{a}{p}+\frac{b}{q}\right)^{m},
 \end{align*}
where $r_{0}=\min \{ \frac{1}{p}, \frac{1}{q}\}$ and $m=1, 2,\ldots$. In particular, if $p=q=2$, then
\begin{align}
\left(a^{\frac{1}{2}}b^{\frac{1}{2}}\right)^{m}+\left(\frac{1}{2}\right)^{m}\left(a^{\frac{m}{2}}-b^{\frac{m}{2}}\right)^{2}\leq 2^{-m}(a+b)^{m}.
\end{align}

 In this paper, we obtain some upper bounds for Berezin number inequalities of off-diagonal part of operator matrix and refinements of them. Moreover, we obtain Berezin number inequalities for diagonal operator matrix.

\section{Main results}

\bigskip To prove our results, we need the following lemmas.
\begin{lemma}
Let  $S\in {\mathcal B}({\mathcal H_1}(\Omega))$, $X\in {\mathcal B}({\mathcal H_2(\Omega), \mathcal H_1(\Omega)})$, $Y\in {\mathcal B}({\mathcal H_1(\Omega),\mathcal H_2(\Omega)})$ and $R\in {\mathcal B}({\mathcal H_2}(\Omega))$. Then the following statements hold:\\
$(a)\,\,\textbf{ber}\left(\left[\begin{array}{cc}
 S&0\\
 0&R
\end{array}\right]\right)$
$\leq \max \{\textbf{ber}(S), \textbf{ber}(R)\};$
\\
\\
$(b)\,\,\textbf{ber}\left(\left[\begin{array}{cc}
0&X\\
Y&0
\end{array}\right]\right)$
$\leq$ $ \frac{1}{2}(\|X\|+\|Y\|).$\\
In particular,
\begin{align}
\textbf{ber}\left(\left[\begin{array}{cc}
0&X\\
X&0
\end{array}\right]\right)
\leq \|X\|.
\end{align}
$(c)\,\,\textbf{ber}(S)=\sup_{\theta\in \mathbb{R}}\textbf{ber}(\texttt{Re}(e^{i\theta}S))$.
\end{lemma}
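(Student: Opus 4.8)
The plan is to treat the three parts separately: parts $(a)$ and $(b)$ by decomposing the normalized reproducing kernel of the orthogonal sum $\mathcal{H}_1(\Omega)\oplus\mathcal{H}_2(\Omega)$ into its two components, and part $(c)$ by a scalar argument resting on the identity $|z|=\sup_{\theta}\mathrm{Re}(e^{i\theta}z)$ together with $\mathrm{Re}\langle Ax,x\rangle=\langle\mathrm{Re}(A)x,x\rangle$.

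For $(a)$ and $(b)$ the first step is to write a normalized reproducing kernel $\hat{k_{\lambda}}$ of the product space in the form $\hat{k_{\lambda}}=(\alpha\,\hat{k}_{\mu},\beta\,\hat{k}_{\nu})$, where $\hat{k}_{\mu}$ and $\hat{k}_{\nu}$ are normalized reproducing kernels of $\mathcal{H}_1(\Omega)$ and $\mathcal{H}_2(\Omega)$ respectively and $|\alpha|^2+|\beta|^2=1$; indeed, normalizing each nonzero component of $\hat{k_{\lambda}}$ returns the normalized reproducing kernel of the corresponding factor. For $(a)$, evaluating the diagonal matrix on this vector makes the off-diagonal contributions vanish and yields $\widetilde{T}(\lambda)=|\alpha|^2\langle S\hat{k}_{\mu},\hat{k}_{\mu}\rangle+|\beta|^2\langle R\hat{k}_{\nu},\hat{k}_{\nu}\rangle$, so that $|\widetilde{T}(\lambda)|\leq|\alpha|^2\textbf{ber}(S)+|\beta|^2\textbf{ber}(R)\leq\max\{\textbf{ber}(S),\textbf{ber}(R)\}$; taking the supremum over $\lambda$ finishes the part. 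For $(b)$, the same substitution gives $\widetilde{T}(\lambda)=\overline{\alpha}\beta\langle X\hat{k}_{\nu},\hat{k}_{\mu}\rangle+\alpha\overline{\beta}\langle Y\hat{k}_{\mu},\hat{k}_{\nu}\rangle$, whence $|\widetilde{T}(\lambda)|\leq|\alpha||\beta|(\|X\|+\|Y\|)$ by Cauchy--Schwarz; since $|\alpha||\beta|\leq\frac{1}{2}(|\alpha|^2+|\beta|^2)=\frac{1}{2}$, the bound $\frac{1}{2}(\|X\|+\|Y\|)$ follows after taking the supremum, and the displayed special case is immediate on setting $Y=X$.

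For $(c)$, I would start from $\textbf{ber}(S)=\sup_{\lambda}|\widetilde{S}(\lambda)|$ and apply the scalar identity $|z|=\sup_{\theta}\mathrm{Re}(e^{i\theta}z)$ to each value $\widetilde{S}(\lambda)=\langle S\hat{k_{\lambda}},\hat{k_{\lambda}}\rangle$. Using $\mathrm{Re}\langle e^{i\theta}S\hat{k_{\lambda}},\hat{k_{\lambda}}\rangle=\langle\mathrm{Re}(e^{i\theta}S)\hat{k_{\lambda}},\hat{k_{\lambda}}\rangle$ and interchanging the two suprema gives $\textbf{ber}(S)=\sup_{\theta}\sup_{\lambda}\langle\mathrm{Re}(e^{i\theta}S)\hat{k_{\lambda}},\hat{k_{\lambda}}\rangle\leq\sup_{\theta}\textbf{ber}(\mathrm{Re}(e^{i\theta}S))$. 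The reverse inequality comes directly from $|\mathrm{Re}\,z|\leq|z|$, which shows $\textbf{ber}(\mathrm{Re}(e^{i\theta}S))\leq\textbf{ber}(S)$ for every $\theta$, and the two estimates combine to the claimed equality.

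The main obstacle I anticipate is formal rather than computational: one must fix the reproducing-kernel structure of the orthogonal sum $\mathcal{H}_1(\Omega)\oplus\mathcal{H}_2(\Omega)$ so that the decomposition $\hat{k_{\lambda}}=(\alpha\hat{k}_{\mu},\beta\hat{k}_{\nu})$ is legitimate and the factor Berezin numbers $\textbf{ber}(S)$ and $\textbf{ber}(R)$ (rather than numerical radii) genuinely appear; once this is settled, the remaining steps are routine quadratic-form manipulations and elementary inequalities.
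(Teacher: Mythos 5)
Your proof is correct. Note that the paper itself states this lemma without proof (it is quoted from the cited literature on Berezin number inequalities for operator matrices), so there is no in-paper argument to compare against; your argument is the standard one and is sound. The decomposition $\hat{k}_{(\lambda_1,\lambda_2)}=(\alpha\hat{k}_{\lambda_1},\beta\hat{k}_{\lambda_2})$ with $\alpha=\|k_{\lambda_1}\|$, $\beta=\|k_{\lambda_2}\|$, $\alpha^2+\beta^2=1$ is exactly the convention the paper uses in its later proofs (e.g.\ Theorem 2.4, where $\hat{k}_{(\lambda_1,\lambda_2)}$ is taken as the normalized vector with components $k_{\lambda_1},k_{\lambda_2}$), so the factor Berezin numbers do legitimately appear; the only point worth making explicit, which you already flag, is the degenerate case where one component vanishes, in which the corresponding term drops out and no normalization is needed. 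Parts (a), (b) and (c) then follow exactly as you write them, with the one small care you correctly take in (c) of using an inequality in one direction (since $\textbf{ber}(\mathrm{Re}(e^{i\theta}S))$ carries an absolute value) and $|\mathrm{Re}\,z|\leq|z|$ for the reverse.
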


\bigskip The next lemma follows from the spectral theorem for positive operators and the Jensen's inequality.
\begin{lemma}
 Let $T\in{\mathcal B}({\mathcal H})$, $ T \geq 0$ and $x\in {\mathcal H}$ such that $\|x\|\leq1$. Then\\
$(a)\,\, \left\langle Tx, x\right\rangle^{r} \leq  \left\langle T^{r}x, x\right\rangle$ for $ r\geq 1;$\\
$(b)\,\,\left\langle T ^{r}x, x\right\rangle  \leq  \left\langle Tx, x\right\rangle^{r}$ for $ 0<r\leq 1$.\\
\end{lemma}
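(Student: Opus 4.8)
The plan is to reduce both inequalities to the classical (scalar) Jensen inequality via the spectral theorem, handling the unit-vector case first and then the scaling that occurs when $\|x\|<1$.

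First I would invoke the spectral theorem for the positive operator $T$: writing $T=\int_{\sigma(T)}\lambda\,dE(\lambda)$ with $\sigma(T)\subseteq[0,\infty)$, and fixing $x$, I would introduce the finite positive Borel measure $d\mu(\lambda)=\langle dE(\lambda)x,x\rangle$ on $[0,\infty)$. Its total mass is $\mu(\sigma(T))=\langle x,x\rangle=\|x\|^{2}\leq 1$, and the functional calculus gives $\langle Tx,x\rangle=\int\lambda\,d\mu(\lambda)$ together with $\langle T^{r}x,x\rangle=\int\lambda^{r}\,d\mu(\lambda)$ for every $r>0$. In this way both statements become inequalities comparing $\left(\int\lambda\,d\mu\right)^{r}$ with $\int\lambda^{r}\,d\mu$.

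When $\|x\|=1$ the measure $\mu$ is a probability measure, so I would apply Jensen's inequality to the function $\varphi(t)=t^{r}$. For $r\geq 1$ the function $\varphi$ is convex, giving $\left(\int\lambda\,d\mu\right)^{r}\leq\int\lambda^{r}\,d\mu$, i.e. $\langle Tx,x\rangle^{r}\leq\langle T^{r}x,x\rangle$, which is $(a)$; for $0<r\leq 1$ the function $\varphi$ is concave, and the reversed inequality yields $(b)$. This disposes of the unit-vector case, which is the conceptual core of the lemma.

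The one point that needs a little care is the case $0\neq\|x\|<1$, where $\mu$ fails to be a probability measure. Here I would set $y=x/\|x\|$, so that $\langle Ty,y\rangle=\|x\|^{-2}\langle Tx,x\rangle$ and $\langle T^{r}y,y\rangle=\|x\|^{-2}\langle T^{r}x,x\rangle$, apply the unit-vector result to $y$, and then restore the factor $\|x\|^{2}$. For $(a)$ this produces $\langle Tx,x\rangle^{r}=\|x\|^{2r}\langle Ty,y\rangle^{r}\leq\|x\|^{2r}\langle T^{r}y,y\rangle\leq\|x\|^{2}\langle T^{r}y,y\rangle=\langle T^{r}x,x\rangle$, where the last inequality uses $\|x\|^{2r}\leq\|x\|^{2}$ for $\|x\|\leq 1$ and $r\geq 1$; for $(b)$ the analogous chain instead uses $\|x\|^{2}\leq\|x\|^{2r}$, valid since $\|x\|\leq 1$ and $0<r\leq 1$. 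The case $x=0$ is trivial since both sides vanish. The only mild obstacle is keeping track of these exponent comparisons so that the direction of each inequality is preserved after rescaling.
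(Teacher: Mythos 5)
Your argument is correct and is exactly the route the paper indicates: the paper offers no written proof beyond the remark that the lemma ``follows from the spectral theorem for positive operators and the Jensen's inequality,'' and your spectral-measure reduction to scalar Jensen is precisely that argument, fleshed out. Your additional care with the case $\|x\|<1$ (using $\|x\|^{2r}\leq\|x\|^{2}$ for $r\geq 1$ and the reverse for $0<r\leq 1$) is a genuine detail the paper glosses over, and you handle it correctly.
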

\begin{lemma}
Let $T\in{\mathcal B}({\mathcal H})$ and $x, y\in {\mathcal H}$ be any vectors.  If $f$, $g$ are nonnegative  continuous functions on $[0, \infty)$ which are satisfying the relation $f(t)g(t)=t\,(t\in[0, \infty))$, then
\begin{align*}
| \left\langle Tx, y \right\rangle |^2 \leq \left\langle f^2(|T |)x ,x \right\rangle\, \left\langle g^2(| T^{*}|)y,y\right\rangle.
 \end{align*}
 \end{lemma}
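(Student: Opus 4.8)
The plan is to reduce the claimed inequality to an ordinary Cauchy--Schwarz estimate, using the polar decomposition of $T$ together with the functional-calculus factorization of $|T|$ supplied by the hypothesis $f(t)g(t)=t$. Write $T=U|T|$ for the polar decomposition, where $U$ is the partial isometry with initial space $\overline{\operatorname{ran}|T|}=(\ker T)^{\perp}$; in particular $U^{*}U|T|=|T|$. Since $f,g\geq0$ are real-valued, the operators $f(|T|)$ and $g(|T|)$ are self-adjoint, they commute (both being continuous functions of $|T|$), and the relation $f(t)g(t)=t$ gives $f(|T|)g(|T|)=|T|$ through the continuous functional calculus.

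First I would rewrite $\langle Tx,y\rangle$ so that one factor lands on $x$ and the conjugate factor on $y$. Using $T=U|T|=Ug(|T|)f(|T|)$ and the self-adjointness of $f(|T|),g(|T|)$,
\begin{align*}
\langle Tx,y\rangle=\langle Ug(|T|)f(|T|)x,y\rangle=\langle f(|T|)x,\,g(|T|)U^{*}y\rangle .
\end{align*}
Applying the Cauchy--Schwarz inequality to the right-hand side and squaring yields
\begin{align*}
|\langle Tx,y\rangle|^{2}\leq\|f(|T|)x\|^{2}\,\|g(|T|)U^{*}y\|^{2}=\langle f^{2}(|T|)x,x\rangle\,\langle Ug^{2}(|T|)U^{*}y,y\rangle ,
\end{align*}
where I have used $\|f(|T|)x\|^{2}=\langle f^{2}(|T|)x,x\rangle$ and $\|g(|T|)U^{*}y\|^{2}=\langle Ug^{2}(|T|)U^{*}y,y\rangle$.

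It remains to replace $Ug^{2}(|T|)U^{*}$ by $g^{2}(|T^{*}|)$, and this is the step I expect to carry the real content. From $TT^{*}=U|T|^{2}U^{*}$ and $U^{*}U|T|=|T|$ one checks $(U|T|U^{*})^{2}=U|T|^{2}U^{*}=|T^{*}|^{2}$, so by uniqueness of the positive square root $|T^{*}|=U|T|U^{*}$. For any continuous $h$ with $h(0)=0$, the relation $U^{*}U|T|=|T|$ propagates through powers (hence through polynomials vanishing at $0$, hence by uniform approximation through all such $h$) to give $h(U|T|U^{*})=Uh(|T|)U^{*}$, i.e. $h(|T^{*}|)=Uh(|T|)U^{*}$. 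The only subtlety is that $g^{2}(0)$ need not vanish, so I would apply this with $h(t)=g^{2}(t)-g^{2}(0)$, obtaining
\begin{align*}
g^{2}(|T^{*}|)-Ug^{2}(|T|)U^{*}=g^{2}(0)\,(I-UU^{*})=g^{2}(0)\,P_{\ker T^{*}}\geq0 ,
\end{align*}
since $I-UU^{*}$ is the orthogonal projection onto $\ker T^{*}$ and $g^{2}(0)\geq0$. This gives $\langle Ug^{2}(|T|)U^{*}y,y\rangle\leq\langle g^{2}(|T^{*}|)y,y\rangle$, and combining it with the displayed Cauchy--Schwarz bound proves the lemma. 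Thus the hard part is not the estimation but the clean transfer of $g^{2}$ across the partial isometry, which is handled by the functional-calculus identity for functions vanishing at $0$ together with the positivity of the leftover term $g^{2}(0)(I-UU^{*})$.
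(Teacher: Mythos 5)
Your proof is correct; note that the paper itself states this lemma without proof, citing it as the known mixed Schwarz inequality of Kittaneh, and your argument is essentially the classical one from that source: polar decomposition, the factorization $|T|=f(|T|)g(|T|)$, Cauchy--Schwarz, and the transfer $Ug^{2}(|T|)U^{*}\leq g^{2}(|T^{*}|)$ via $|T^{*}|=U|T|U^{*}$. Your explicit handling of the case $g^{2}(0)\neq 0$ through $h(t)=g^{2}(t)-g^{2}(0)$ and the positivity of $g^{2}(0)(I-UU^{*})$ is a detail usually glossed over, and it is handled correctly here.
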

 Now, we are in position to demonstrate the main results of this section by
using some ideas from 
\begin{theorem}
Let
$T=\left[\begin{array}{cc}
 0&X\\
 Y&0
 \end{array}\right]$,  $r\geq 1$ and $f$, $g$ be nonnegative  continuous  functions on $[0, \infty)$ satisfying the relation $f(t)g(t)=t\,(t\in[0, \infty))$. Then
 \begin{align*}
 (\textbf{ber}(T))^{r}\leq \frac{2^r}{2}\textbf{ber}^{\frac{1}{2}}(f^{2r}(|X|)+g^{2r}(|Y^*|))\textbf{ber}^{\frac{1}{2}}(f^{2r}(|Y|)+g^{2r}(|X^*|)),
   \end{align*}
   and \begin{align*}
 (\textbf{ber}(T))^{r}\leq \frac{2^r}{2}\textbf{ber}^{\frac{1}{2}}(f^{2r}(|X|)+f^{2r}(|Y^*|))\textbf{ber}^{\frac{1}{2}}(g^{2r}(|Y|)+g^{2r}(|X^*|)).
   \end{align*}
 \end{theorem}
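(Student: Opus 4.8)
The plan is to estimate the Berezin symbol of $T$ directly at a normalized reproducing kernel of the direct sum $\mathcal H_1(\Omega)\oplus\mathcal H_2(\Omega)$ and then push the resulting scalar inequality through the three elementary tools supplied by the lemmas. Writing such a kernel as a pair $(k_{\lambda_1},k_{\lambda_2})$ with $\|k_{\lambda_1}\|^2+\|k_{\lambda_2}\|^2=1$ (so each component is a scalar multiple of a reproducing kernel and $\|k_{\lambda_i}\|\le 1$), a direct computation gives
\[
\widetilde T(\lambda)=\langle Xk_{\lambda_2},k_{\lambda_1}\rangle+\langle Yk_{\lambda_1},k_{\lambda_2}\rangle,
\]
whence $|\widetilde T(\lambda)|\le|\langle Xk_{\lambda_2},k_{\lambda_1}\rangle|+|\langle Yk_{\lambda_1},k_{\lambda_2}\rangle|$. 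To each term I apply the mixed Schwarz inequality of Lemma~3. For the first asserted bound I apply it to $X$ (with $x=k_{\lambda_2}$, $y=k_{\lambda_1}$) and to $Y$ (with $x=k_{\lambda_1}$, $y=k_{\lambda_2}$); for the second bound I first rewrite $\langle Yk_{\lambda_1},k_{\lambda_2}\rangle=\overline{\langle Y^{*}k_{\lambda_2},k_{\lambda_1}\rangle}$ and apply Lemma~3 to $Y^{*}$, which is precisely what trades $f^{2r}(|Y|),g^{2r}(|Y^{*}|)$ for $f^{2r}(|Y^{*}|),g^{2r}(|Y|)$ and produces the ``$f$ together, $g$ together'' grouping. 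Setting $a=\langle f^{2}(|X|)k_{\lambda_2},k_{\lambda_2}\rangle^{1/2}$, $b=\langle g^{2}(|X^{*}|)k_{\lambda_1},k_{\lambda_1}\rangle^{1/2}$, $c=\langle f^{2}(|Y|)k_{\lambda_1},k_{\lambda_1}\rangle^{1/2}$ and $d=\langle g^{2}(|Y^{*}|)k_{\lambda_2},k_{\lambda_2}\rangle^{1/2}$, Lemma~3 yields $|\widetilde T(\lambda)|\le ab+cd$.

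Next I would raise this to the power $r$ and invoke convexity of $t\mapsto t^{r}$ for $r\ge1$, which gives $(ab+cd)^{r}\le 2^{r-1}\bigl((ab)^{r}+(cd)^{r}\bigr)$; this is exactly where the constant $2^{r}/2=2^{r-1}$ of the statement is born. Then, viewing $a^{r}b^{r}+c^{r}d^{r}$ as the Euclidean inner product of $(a^{r},d^{r})$ and $(b^{r},c^{r})$, the Cauchy--Schwarz inequality in $\mathbb{R}^{2}$ gives $a^{r}b^{r}+c^{r}d^{r}\le (a^{2r}+d^{2r})^{1/2}(b^{2r}+c^{2r})^{1/2}$. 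The point of pairing $(a,d)$ against $(b,c)$ is that it collects the two forms evaluated at $k_{\lambda_2}$ (namely $a,d$) together and the two evaluated at $k_{\lambda_1}$ (namely $b,c$) together, which is the grouping appearing on the right-hand side.

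Finally I would apply Lemma~2$(a)$ to each of the four inner products to move the exponent inside. Since $\|k_{\lambda_2}\|\le1$ and $f^{2}(|X|)\ge0$, we have $a^{2r}=\langle f^{2}(|X|)k_{\lambda_2},k_{\lambda_2}\rangle^{r}\le\langle f^{2r}(|X|)k_{\lambda_2},k_{\lambda_2}\rangle$ and, likewise, $d^{2r}\le\langle g^{2r}(|Y^{*}|)k_{\lambda_2},k_{\lambda_2}\rangle$; adding gives $a^{2r}+d^{2r}\le\langle(f^{2r}(|X|)+g^{2r}(|Y^{*}|))k_{\lambda_2},k_{\lambda_2}\rangle$, and the analogous step at $k_{\lambda_1}$ gives $b^{2r}+c^{2r}\le\langle(f^{2r}(|Y|)+g^{2r}(|X^{*}|))k_{\lambda_1},k_{\lambda_1}\rangle$. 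Collecting the factors and taking the supremum over $\lambda$ yields the first inequality; the second follows verbatim from the $Y^{*}$ version of Lemma~3.

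The main obstacle is the very last step: passing from the quadratic form $\langle(f^{2r}(|X|)+g^{2r}(|Y^{*}|))k_{\lambda_2},k_{\lambda_2}\rangle$ to the Berezin number $\textbf{ber}(f^{2r}(|X|)+g^{2r}(|Y^{*}|))$, since $k_{\lambda_2}$ has norm at most $1$ rather than exactly $1$. This is legitimate only because $k_{\lambda_2}$ is a scalar multiple of a normalized reproducing kernel, so the form equals $\|k_{\lambda_2}\|^{2}\le1$ times a genuine Berezin-symbol value of the positive operator; bounding $\|k_{\lambda_2}\|^{2}$ and $\|k_{\lambda_1}\|^{2}$ separately by $1$ (rather than jointly, which would cost a further factor $\tfrac12$) is what keeps the constant at $2^{r-1}$. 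A secondary, purely clerical difficulty is the bookkeeping in the mixed Schwarz step, where one must track which of $|X|,|X^{*}|,|Y|,|Y^{*}|$ is paired with which kernel so that the Cauchy--Schwarz grouping reproduces exactly the operators in the statement.
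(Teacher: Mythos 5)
Your proposal is correct and follows essentially the same route as the paper's proof: expand $\langle T\hat{k}_{(\lambda_1,\lambda_2)},\hat{k}_{(\lambda_1,\lambda_2)}\rangle$, apply the triangle inequality, the mixed Schwarz inequality (Lemma~3), convexity of $t^r$, Lemma~2$(a)$ to move the power $r$ inside the quadratic forms, and the two-term Cauchy--Schwarz inequality with exactly the pairing that groups the $k_{\lambda_2}$-forms against the $k_{\lambda_1}$-forms. The only differences are cosmetic (you apply Lemma~3 before the convexity step and Lemma~2 after the Cauchy--Schwarz step, and you obtain the second inequality by applying Lemma~3 to $Y^{*}$ rather than by interchanging $f$ and $g$); your explicit remark that $\|k_{\lambda_i}\|\le 1$ justifies the passage to the Berezin numbers is a point the paper leaves implicit.
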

\begin{proof}
For any $(\lambda_{1},\lambda_{2})\in \Omega_{1}\times\Omega_{2}$, let $\hat{k}_{(\lambda_{1},\lambda_{2})}=\left[\begin{array}{cc}
 k_{\lambda_{1}}\\
 k_{\lambda_{2}}
 \end{array}\right]$ be the normalized reproducing kernel in $\mathcal H(\Omega_{1})\oplus\mathcal H(\Omega_{2})$. Then
 \begin{align*}
 &|\left\langle T\hat{k}_{(\lambda_{1},\lambda_{2})},\hat{k}_{(\lambda_{1},\lambda_{2})} \right\rangle |^{r}\\&
 =|\left\langle Xk_{\lambda_{2}}, k_{\lambda_{1}} \right\rangle+\left\langle Y k_{\lambda_{1}}, k_{\lambda_{2}} \right\rangle |^{r}\\&
 \leq\left(|\left\langle Xk_{\lambda_{2}}, k_{\lambda_{1}} \right\rangle|+|\left\langle Y k_{\lambda_{1}}, k_{\lambda_{2}} \right\rangle |\right)^{r} \qquad\qquad (\textrm {by the triangular inequality})\\&
 \leq\frac{2^r}{2}\left(|\left\langle Xk_{\lambda_{2}}, k_{\lambda_{1}} \right\rangle|^r+|\left\langle Yk_{\lambda_{1}}, k_{\lambda_{2}} \right\rangle |^{r}\right)
 \qquad (\textrm {by the convexity of\,} f(t)=t^r)\\&
 \leq\frac{2^r}{2}\Big(\left(\left\langle f^2(|X|)k_{\lambda_{2}}, k_{\lambda_{2}} \right\rangle^\frac{1}{2}\left\langle g^2(|X^*|)k_{\lambda_{1}}, k_{\lambda_{1}} \right\rangle^\frac{1}{2}\right)^r
 \\&\qquad+\left(\left\langle f^2(|Y|)k_{\lambda_{1}}, k_{\lambda_{1}} \right\rangle^\frac{1}{2}\left\langle g^2(|Y^*|)k_{\lambda_{2}}, k_{\lambda_{2}} \right\rangle^\frac{1}{2} \right)^{r}\Big)
\qquad(\textrm {by Lemma\,\,})\\&
\leq\frac{2^r}{2}\big(\left\langle f^{2r}(|X|)k_{\lambda_{2}}, k_{\lambda_{2}} \right\rangle^\frac{1}{2}\left\langle g^{2r}|X^*|k_{\lambda_{1}}, k_{\lambda_{1}} \right\rangle^\frac{1}{2}\\&
 +\left\langle f^{2r}(|Y|)k_{\lambda_{1}}, k_{\lambda_{1}} \right\rangle^\frac{1}{2}\left\langle g^{2r}(|Y^*|)k_{\lambda_{2}}, k_{\lambda_{2}} \right\rangle^\frac{1}{2}\big)\,\,\qquad\qquad(\textrm {by Lemma\,\,})
  \end{align*}
 \begin{align*}
 &\leq\frac{2^r}{2}\left(\left\langle f^{2r}(|X|)k_{\lambda_{2}}, k_{\lambda_{2}} \right\rangle+\left\langle g^{2r}(|Y^*|)k_{\lambda_{2}}, k_{\lambda_{2}} \right\rangle\right)^\frac{1}{2}\\&\,\,\,\times
 \left(\left\langle f^{2r}(|Y|)k_{\lambda_{1}}, k_{\lambda_{1}} \right\rangle+\left\langle g^{2r}(|X^*|)k_{\lambda_{1}}, k_{\lambda_{1}} \right\rangle\right)^\frac{1}{2}\\&
  \qquad\qquad\qquad\qquad (\textrm {by the Cauchy-Schwarz inequality})\\&
  =\frac{2^r}{2}\left\langle (f^{2r}(|X|)+g^{2r}(|Y^*|))k_{\lambda_{2}}, k_{\lambda_{2}} \right\rangle^\frac{1}{2} \left\langle (f^{2r}(|Y|)+g^{2r}(|X^*|))k_{\lambda_{1}}, k_{\lambda_{1}} \right\rangle^\frac{1}{2} \\&
  \leq \frac{2^r}{2}\textbf{ber}^{\frac{1}{2}}(f^{2r}(|X|)+g^{2r}(|Y^*|))\textbf{ber}^{\frac{1}{2}}(f^{2r}(|Y|)+g^{2r}(|X^*|)).
 \end{align*}
 Therefore
 \begin{align*}
 \textbf{ber}^{r}(T)\leq\frac{2^r}{2}\textbf{ber}^{\frac{1}{2}}(f^{2r}(|X|)+g^{2r}(|Y^*|))\textbf{ber}^{\frac{1}{2}}(f^{2r}(|Y|)+g^{2r}(|X^*|)).
 \end{align*}
 Hence, we get the first inequality. For the proof of the second inequality, we have
 \begin{align}
 &\left|\left\langle T\hat{k}_{(\lambda_{1},\lambda_{2})}, \hat{k}_{(\lambda_{1},\lambda_{2})} \right\rangle \right|^{r}\nonumber\\&
 =|\left\langle Xk_{\lambda_{2}}, k_{\lambda_{1}} \right\rangle+\left\langle Yk_{\lambda_{1}}, k_{\lambda_{2}} \right\rangle |^{r}\nonumber\\&
 \leq\left(|\left\langle Xk_{\lambda_{2}}, k_{\lambda_{1}} \right\rangle|+|\left\langle Yk_{\lambda_{1}}, k_{\lambda_{2}} \right\rangle |\right)^{r} \qquad\qquad (\textrm {by the triangular inequality})\nonumber\\&
 \leq\frac{2^r}{2}\left(|\left\langle Xk_{\lambda_{2}}, k_{\lambda_{1}} \right\rangle|^r+|\left\langle Yk_{\lambda_{1}}, k_{\lambda_{2}} \right\rangle |^{r}\right)
 \qquad (\textrm {by the convexity of\,} f(t)=t^r)\nonumber\\&
 \leq\frac{2^r}{2}\left(\left(\left\langle f^2(|X|)k_{\lambda_{2}}, k_{\lambda_{2}} \right\rangle^\frac{1}{2}\left\langle g^2(|X^*|)k_{\lambda_{1}}, k_{\lambda_{1}} \right\rangle^\frac{1}{2}\right)^r\right.\nonumber
\\&\qquad +\left.\left(\left\langle g^2(|Y|)k_{\lambda_{1}}, k_{\lambda_{1}} \right\rangle^\frac{1}{2}\left\langle f^2(|Y^*|)k_{\lambda_{2}}, k_{\lambda_{2}} \right\rangle^\frac{1}{2} \right)^{r}\right)
\qquad(\textrm {by Lemma\,\,}).
 \end{align}
  With  a similar argument to the proof of the first inequality we have the second inequality and this completes the proof of the theorem.
\end{proof}
\bigskip Theorem  includes some special cases as follows.
\begin{corollary}
Let
$T=\left[\begin{array}{cc}
 0&X\\
 Y&0
 \end{array}\right]\in {\mathcal B}({\mathcal H_1\oplus\mathcal H_2})$, $0\leq p\leq1$ and $r\geq1$. Then
 \begin{align*}
 \textbf{ber}^{r}(T)\leq 2^{r-2}  \textbf{ber}^{\frac{1}{2}} (| X |^{2rp} +   | Y^* |^{2r(1-p)})\textbf{ber}^{\frac{1}{2}}(|Y |^{2rp} +  | X^* |^{2r(1-p)})
   \end{align*}
   and
\begin{align*}
 \textbf{ber}^{r}(T)\leq 2^{r-2}\textbf{ber}^{\frac{1}{2}} (| X |^{2rp} +   | Y^* |^{2rp})\textbf{ber}^{\frac{1}{2}}(|Y |^{2r(1-p)} + | X^* |^{2r(1-p)}).
   \end{align*}
  \end{corollary}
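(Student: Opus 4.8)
The plan is to obtain both inequalities as direct specializations of the preceding Theorem, the only freedom being the choice of the pair $(f,g)$. Since that Theorem is valid for every pair of nonnegative continuous functions on $[0,\infty)$ satisfying $f(t)g(t)=t$, the natural choice here is the pair of power functions $f(t)=t^{p}$ and $g(t)=t^{1-p}$ for the given parameter $0\leq p\leq 1$. These are nonnegative and continuous on $[0,\infty)$ and satisfy $f(t)g(t)=t^{p}\,t^{1-p}=t$, so the hypotheses of the Theorem are met and it may be applied verbatim.

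Next I would carry out the functional-calculus substitution. From $f(t)=t^{p}$ we get $f^{2r}(t)=t^{2rp}$, so that $f^{2r}(|X|)=|X|^{2rp}$ and $f^{2r}(|Y|)=|Y|^{2rp}$; from $g(t)=t^{1-p}$ we get $g^{2r}(t)=t^{2r(1-p)}$, so that $g^{2r}(|Y^{*}|)=|Y^{*}|^{2r(1-p)}$ and $g^{2r}(|X^{*}|)=|X^{*}|^{2r(1-p)}$. Inserting these four operators into the first inequality of the Theorem yields the first bound of the Corollary. For the second bound I would instead read off $f^{2r}(|X|)=|X|^{2rp}$, $f^{2r}(|Y^{*}|)=|Y^{*}|^{2rp}$, $g^{2r}(|Y|)=|Y|^{2r(1-p)}$ and $g^{2r}(|X^{*}|)=|X^{*}|^{2r(1-p)}$, and substitute into the second inequality of the Theorem; the numerical prefactor is inherited directly from the constant appearing there.

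There is no genuine analytic obstacle, since all the work already resides in the Theorem; the points requiring attention are purely bookkeeping. First, one must confirm that $t^{p}$ and $t^{1-p}$ really lie in the admissible class, i.e.\ that they are continuous on $[0,\infty)$, which forces $p\geq 0$ and $1-p\geq 0$ and is exactly the stipulated range $0\leq p\leq 1$. Second, and this is the main thing to watch, one must keep track of which exponent, $2rp$ or $2r(1-p)$, attaches to each of $|X|,|Y|,|X^{*}|,|Y^{*}|$: this is dictated solely by whether the corresponding factor originated from $f$ or from $g$ in the inequality being specialized, and the two displayed bounds differ precisely because the two inequalities of the Theorem pair $f$ and $g$ with the operators in different ways. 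Once the exponents are matched correctly, both inequalities follow at once.
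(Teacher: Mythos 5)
Your route is the same as the paper's: both obtain the corollary by specializing the preceding theorem to $f(t)=t^{p}$, $g(t)=t^{1-p}$, and your bookkeeping of which exponent ($2rp$ versus $2r(1-p)$) attaches to which of $|X|,|Y|,|X^*|,|Y^*|$ is correct in both displayed bounds. One caveat, which affects the paper's own one-line proof just as much as yours: the constant does not come out as stated. The theorem's bound carries the factor $\tfrac{2^{r}}{2}=2^{r-1}$, so a verbatim substitution — which is what you propose when you say the prefactor is ``inherited directly'' — proves the two inequalities only with $2^{r-1}$ in place of the claimed $2^{r-2}$. To recover $2^{r-2}$ one must re-enter the proof of the theorem and use that the components of the normalized kernel $\hat{k}_{(\lambda_1,\lambda_2)}$ satisfy $\|k_{\lambda_1}\|^{2}+\|k_{\lambda_2}\|^{2}=1$, hence $\|k_{\lambda_1}\|\,\|k_{\lambda_2}\|\le\tfrac12$, which sharpens the final step to $\langle A k_{\lambda_2},k_{\lambda_2}\rangle^{1/2}\langle B k_{\lambda_1},k_{\lambda_1}\rangle^{1/2}\le\tfrac12\,\textbf{ber}^{1/2}(A)\,\textbf{ber}^{1/2}(B)$ and supplies the missing factor $\tfrac12$. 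So your argument is sound and matches the source's intent, but as written it establishes the corollary only with the weaker constant; you should either state that weaker constant or add the normalization observation above.
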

  \begin{proof}
  The result follows immediately from Theorem  for $f(t)=t^p$ and $g(t)=t^{1-p}\,\,(0\leq p\leq1)$.
  \end{proof}
 \begin{remark} Taking $f(t)=g(t)=t^{\frac{1}{2}}\,(t\in[0,\infty))$ and $r=1$ in Theorem , we get the following result:
 \begin{align*}
 \textbf{ber}(\left[\begin{array}{cc}
 0&X\\
 Y&0
 \end{array}\right])\leq \frac{1}{2} \textbf{ber}^{\frac{1}{2}}(| X | + | Y^* |)\textbf{ber}^{\frac{1}{2}}(|Y| + |X^*|).
   \end{align*}
 \end{remark}

  \bigskip If we put $Y=X$, $r=1$ and $f(t)=g(t)=t^{\frac{1}{2}}$ in Theorem , then we get a refinement of inequality  as follows.
 \begin{corollary}
Let
$X\in {\mathcal B}({\mathcal H})$,  $r\geq 1$ and $f$, $g$ be nonnegative  continuous  functions on $[0, \infty)$ satisfying the relation $f(t)g(t)=t\,(t\in[0, \infty))$. Then
 \begin{align*}
 \textbf{ber}\left(\left[\begin{array}{cc}
 0&X\\
 X&0
 \end{array}\right]\right)\leq \frac{1}{2}\textbf{ber}(|X|+|X^*|)\leq\|X\|
   \end{align*}
 \end{corollary}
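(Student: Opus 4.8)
The plan is to read off the left-hand inequality as a one-line specialization of the results already proved, and to obtain the right-hand inequality from the two elementary facts recorded in the introduction, namely $\textbf{ber}(A)\leq\|A\|$ and the subadditivity of the operator norm. No genuinely new estimate is required; the entire argument is bookkeeping of the specialized functions and the prefactor.

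First I would apply the theorem above with $Y=X$, $r=1$ and $f(t)=g(t)=t^{\frac{1}{2}}$; equivalently, this is exactly the preceding Remark specialized to $Y=X$. For these functions $f^{2}(t)=g^{2}(t)=t$, so that $f^{2r}(|X|)=|X|$, $g^{2r}(|Y^{*}|)=|X^{*}|$, $f^{2r}(|Y|)=|X|$ and $g^{2r}(|X^{*}|)=|X^{*}|$, while the prefactor reduces to $2^{r-2}=\frac{1}{2}$. The two Berezin factors then coincide,
\begin{align*}
\textbf{ber}^{\frac{1}{2}}\left(f^{2r}(|X|)+g^{2r}(|Y^{*}|)\right)=\textbf{ber}^{\frac{1}{2}}\left(f^{2r}(|Y|)+g^{2r}(|X^{*}|)\right)=\textbf{ber}^{\frac{1}{2}}\left(|X|+|X^{*}|\right),
\end{align*}
so their product equals $\textbf{ber}(|X|+|X^{*}|)$ and we obtain
\begin{align*}
\textbf{ber}\left(\left[\begin{array}{cc}0&X\\X&0\end{array}\right]\right)\leq\frac{1}{2}\,\textbf{ber}(|X|+|X^{*}|).
\end{align*}

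For the second inequality I would simply chain $\textbf{ber}(A)\leq\|A\|$ with the triangle inequality for $\|\cdot\|$ and the identities $\||X|\|=\||X^{*}|\|=\|X\|$:
\begin{align*}
\textbf{ber}(|X|+|X^{*}|)\leq\||X|+|X^{*}|\|\leq\||X|\|+\||X^{*}|\|=2\|X\|,
\end{align*}
and dividing by $2$ gives $\frac{1}{2}\textbf{ber}(|X|+|X^{*}|)\leq\|X\|$; concatenating the two displays completes the proof. I do not expect any real obstacle, since the content is purely a specialization followed by elementary norm bounds. The only point worth verifying with care is that the specialized prefactor is indeed $2^{r-2}=\frac{1}{2}$: it arises as the product of the convexity factor $2^{r-1}$ with the factor $\frac{1}{2}$ coming from the normalization estimate $\|k_{\lambda_{1}}\|\,\|k_{\lambda_{2}}\|\leq\frac{1}{2}$. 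Finally, it is worth stressing the conceptual payoff rather than any computational difficulty: this chain sharpens the crude bound $\textbf{ber}(T)\leq\|X\|$ from the Lemma for the symmetric off-diagonal matrix $T$ by inserting the intermediate quantity $\frac{1}{2}\textbf{ber}(|X|+|X^{*}|)$.
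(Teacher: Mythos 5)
Your proposal is correct and follows essentially the same route as the paper, which states this corollary as the direct specialization $Y=X$, $r=1$, $f(t)=g(t)=t^{1/2}$ of the preceding theorem (equivalently of the Remark), with the bound $\leq\|X\|$ then obtained exactly as you do from $\textbf{ber}(A)\leq\|A\|$, the triangle inequality, and $\||X|\|=\||X^{*}|\|=\|X\|$. Your care over the prefactor is warranted and well placed: the theorem as displayed carries the constant $\tfrac{2^{r}}{2}=2^{r-1}$, which at $r=1$ would only give $\textbf{ber}(|X|+|X^{*}|)$ rather than $\tfrac{1}{2}\textbf{ber}(|X|+|X^{*}|)$, and the needed extra factor $\tfrac12$ comes, as you say, from the estimate $\|k_{\lambda_{1}}\|\,\|k_{\lambda_{2}}\|\leq\tfrac12$ inside the theorem's proof (the same constant $2^{r-2}$ the paper uses in its other corollaries).
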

 In the following corollary we obtain refinement of .
 \begin{corollary}
 Let
$T=\left[\begin{array}{cc}
 0&X\\
 Y&0
 \end{array}\right]$. Then
 \begin{align*}
 \textbf{ber}(T)&\leq\frac{1}{2}\textbf{ber}^{\frac{1}{2}}(|X|+|Y^*|)\textbf{ber}^{\frac{1}{2}}(|Y|+|X^*|)\\&\leq \frac{1}{2}\max\{\textbf{ber}(|X|+|Y^*|),\textbf{ber}(|Y|+|X^*|)\}.
   \end{align*}
 \end{corollary}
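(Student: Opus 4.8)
The plan is to prove the two inequalities in the chain separately, since the first is an immediate specialization of the main theorem while the second is a purely scalar estimate. For the first inequality I would appeal to the main theorem with $f(t)=g(t)=t^{1/2}$ and $r=1$, precisely as recorded in the Remark preceding the statement. With this choice $f^{2}(t)=g^{2}(t)=t$, so that $f^{2r}(|X|)=|X|$, $g^{2r}(|Y^{*}|)=|Y^{*}|$, $f^{2r}(|Y|)=|Y|$ and $g^{2r}(|X^{*}|)=|X^{*}|$. Substituting these four identities into the first conclusion of the theorem reproduces verbatim the bound of the Remark, namely
\[
\textbf{ber}(T)\leq\tfrac12\,\textbf{ber}^{\frac{1}{2}}(|X|+|Y^{*}|)\,\textbf{ber}^{\frac{1}{2}}(|Y|+|X^{*}|),
\]
which is the first asserted inequality. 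No additional computation is required; it is merely the reading off of a special case.

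For the second inequality I would invoke the elementary fact that the geometric mean of two nonnegative numbers never exceeds their maximum, i.e.\ $\sqrt{ab}\leq\max\{a,b\}$ whenever $a,b\geq0$ (indeed, if $a\leq b$ then $\sqrt{ab}\leq\sqrt{b^{2}}=b$, and symmetrically). Putting $a=\textbf{ber}(|X|+|Y^{*}|)$ and $b=\textbf{ber}(|Y|+|X^{*}|)$---both nonnegative, since the Berezin number is a supremum of moduli and hence always $\geq0$---gives
\[
\textbf{ber}^{\frac{1}{2}}(|X|+|Y^{*}|)\,\textbf{ber}^{\frac{1}{2}}(|Y|+|X^{*}|)=\sqrt{ab}\leq\max\{a,b\}.
\]
Multiplying by $\tfrac12$ produces the second bound and completes the chain.

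No serious obstacle arises: both steps are routine once the main theorem is in hand, and the only point deserving explicit mention is the nonnegativity of $a$ and $b$, which is exactly what licenses the inequality $\sqrt{ab}\leq\max\{a,b\}$. Should one prefer not to cite the Remark, the first inequality can equally be obtained by rerunning the short computation in the proof of the main theorem with $f=g=t^{1/2}$ and $r=1$; but extracting it as a specialization is the cleaner route.
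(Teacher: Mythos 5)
Your proof is correct and follows essentially the same route as the paper: both obtain the first inequality by specializing the main theorem to $f(t)=g(t)=t^{1/2}$, $r=1$, and both then pass from the geometric mean to the maximum (the paper inserts the intermediate arithmetic mean via AM--GM, whereas you use $\sqrt{ab}\leq\max\{a,b\}$ directly, a negligible difference).
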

 \begin{proof}
 In Theorem , we putting $r=1$, $f(t)=g(t)=t^{\frac{1}{2}}$ and applying arithmetic-geometric mean, and get desired result as follows:
 \begin{align*}
 \textbf{ber}(T)&\leq\frac{1}{2}\textbf{ber}^{\frac{1}{2}}(|X|+|Y^*|)\textbf{ber}^{\frac{1}{2}}(|Y|+|X^*|)\\&
 \leq\frac{1}{2}\left(\frac{\textbf{ber}(|X|+|Y^*|)+\textbf{ber}(|Y|+|X^*|)}{2}\right)\\&
 \leq \frac{1}{2}\max\{\textbf{ber}(|X|+|Y^*|),\textbf{ber}(|Y|+|X^*|)\}.
 \end{align*}
 This completes the proof.
 \end{proof}
  Applying inequality  we obtain the following theorem.
\begin{theorem}
Let
$T=\left[\begin{array}{cc}
 0&X\\
 Y&0
 \end{array}\right]\in {\mathcal B}({\mathcal H_1\oplus\mathcal H_2})$ and  $f$, $g$ be nonnegative  continuous  functions on $[0, \infty)$  satisfying the relation $f(t)g(t)=t$ $(t\in [0, \infty))$. Then for $r\geq 1$
\begin{align*}
 \textbf{ber}^{r}(T)&\leq 2^{r-2}\left(\textbf{ber}(f^{2r}(|X|)+g^{2r}(|Y^*|))+\textbf{ber}(f^{2r}(|Y|)+g^{2r}(|X^*|))\right)\\&
 \,\,\,\,\,\,\,\,-2^{r-2}\inf_{\|(k_{\lambda_{1}},k_{\lambda_{2}})\|=1} \eta(k_{\lambda_{1}},k_{\lambda_{2}}),
 \end{align*}
 where
 {\footnotesize\begin{align*}
 \eta (k_{\lambda_{1}},k_{\lambda_{2}})=\left(\left\langle \left(f^{2r}(|X|)+g^{2r}(|Y^*|)\right)k_{\lambda_{2}},k_{\lambda_{2}}\right\rangle^\frac{1}{2}-\left\langle \left(f^{2r}(|Y|)+g^{2r}(|X^*|)\right)k_{\lambda_{1}},k_{\lambda_{1}}\right\rangle^\frac{1}{2}\right)^2.
 \end{align*}}
 \end{theorem}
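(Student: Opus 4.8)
The plan is to rerun, verbatim, the chain of estimates in the proof of the first theorem of this section, and only at the very last step to replace the arithmetic--geometric mean / Cauchy--Schwarz estimate by the sharpened scalar Young inequality (1.2). Fix a normalized reproducing kernel $\hat{k}_{(\lambda_{1},\lambda_{2})}=\left[\begin{smallmatrix} k_{\lambda_{1}}\\ k_{\lambda_{2}}\end{smallmatrix}\right]$ in $\mathcal H(\Omega_1)\oplus\mathcal H(\Omega_2)$ and abbreviate the two nonnegative scalars
\[
a=\left\langle\left(f^{2r}(|X|)+g^{2r}(|Y^*|)\right)k_{\lambda_{2}},k_{\lambda_{2}}\right\rangle,
\qquad
b=\left\langle\left(f^{2r}(|Y|)+g^{2r}(|X^*|)\right)k_{\lambda_{1}},k_{\lambda_{1}}\right\rangle .
\]
Applying the triangle inequality, the convexity of $t\mapsto t^r$, Lemma 2.3 and Lemma 2.2 exactly as before, one arrives at
\[
\left|\left\langle T\hat{k}_{(\lambda_{1},\lambda_{2})},\hat{k}_{(\lambda_{1},\lambda_{2})}\right\rangle\right|^{r}
\leq \frac{2^r}{2}\,a^{\frac12}b^{\frac12}.
\]

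Next I would invoke inequality (1.2) with $m=1$, which in that case reduces to the identity-sharpened form $a^{\frac12}b^{\frac12}+\frac12\left(a^{\frac12}-b^{\frac12}\right)^2\le\frac12(a+b)$, i.e.
\[
a^{\frac12}b^{\frac12}\le \tfrac12(a+b)-\tfrac12\left(a^{\frac12}-b^{\frac12}\right)^2 .
\]
Multiplying by $\frac{2^r}{2}=2^{r-1}$ and recognizing that $\left(a^{\frac12}-b^{\frac12}\right)^2=\eta(k_{\lambda_{1}},k_{\lambda_{2}})$ yields, for \emph{every} $(\lambda_1,\lambda_2)\in\Omega_1\times\Omega_2$,
\[
\left|\left\langle T\hat{k}_{(\lambda_{1},\lambda_{2})},\hat{k}_{(\lambda_{1},\lambda_{2})}\right\rangle\right|^{r}
\le 2^{r-2}(a+b)-2^{r-2}\,\eta(k_{\lambda_{1}},k_{\lambda_{2}}).
\]

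Finally I would pass to the supremum over $(\lambda_1,\lambda_2)$. Since $-\eta(k_{\lambda_{1}},k_{\lambda_{2}})\le -\inf_{\|(k_{\lambda_{1}},k_{\lambda_{2}})\|=1}\eta$ holds pointwise, the subtracted quantity becomes a genuine constant, and taking the supremum then gives $\textbf{ber}^{r}(T)\le 2^{r-2}\sup_{(\lambda_1,\lambda_2)}(a+b)-2^{r-2}\inf\eta$. Using subadditivity of the supremum together with the positivity of the operators $f^{2r}(|X|)+g^{2r}(|Y^*|)$ and $f^{2r}(|Y|)+g^{2r}(|X^*|)$, whose Berezin numbers equal $\sup_{\lambda_2}a$ and $\sup_{\lambda_1}b$ respectively, delivers the stated estimate.

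The only point needing care is the bookkeeping with $\sup$ and $\inf$: the correction term must be bounded below by its infimum \emph{before} the supremum over $\lambda$ is applied, so that what is subtracted is a constant independent of $\lambda$; and the split $\sup(a+b)\le\sup a+\sup b$ only enlarges the right-hand side, hence is permissible. No genuine analytic obstacle arises beyond this ordering, since the heavy lifting has already been done in establishing the factorized bound reused from the first theorem.
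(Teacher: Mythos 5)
Your proposal is correct and follows essentially the same route as the paper: both rerun the chain of estimates from the first theorem of the section down to the bound $\frac{2^{r}}{2}a^{\frac12}b^{\frac12}$ and then apply the refined Young inequality (the case $m=1$ of inequality (1.2)) to obtain the term $2^{r-2}(a+b)-2^{r-2}\eta(k_{\lambda_1},k_{\lambda_2})$ before passing to the supremum. The only cosmetic difference is the order of operations at the end (you bound $-\eta$ by $-\inf\eta$ first and then split the supremum, while the paper bounds $a$ and $b$ by the respective Berezin numbers first), which changes nothing.
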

 \begin{proof}
 For any $(\lambda_{1},\lambda_{2})\in \Omega_{1}\times\Omega_{2}$, let $\hat{k}_{(\lambda_{1},\lambda_{2})}=\left[\begin{array}{cc}
 k_{\lambda_{1}}\\
 k_{\lambda_{2}}
 \end{array}\right]$ be the normalized reproducing kernel in $\mathcal H(\Omega_{1})\oplus\mathcal H(\Omega_{2})$. Then
   {\footnotesize\begin{align*}
 &|\langle T\hat{k}_{(\lambda_{1},\lambda_{2})}, \hat{k}_{(\lambda_{1},\lambda_{2})}\rangle|^{r}
 \\& = |\langle Xk_{\lambda_{2}},k_{\lambda_{1}}\rangle+\langle Yk_{\lambda_{1}}, k_{\lambda_{2}}\rangle|^{r}
 \\&\leq\left(|\langle Xk_{\lambda_{2}},k_{\lambda_{1}}\rangle|+|\langle Yk_{\lambda_{1}}, k_{\lambda_{2}}\rangle|\right)^{r}\qquad(\textrm {\normalsize{by the triangular inequality}})
 \\&\leq\frac{2^r}{2}\left(|\langle Xk_{\lambda_{2}},k_{\lambda_{1}}\rangle|^r+|\langle Yk_{\lambda_{1}}, k_{\lambda_{2}}\rangle|^r\right)\qquad(\textrm {\normalsize{by the convexity of}\,} f(t)=t^r)
    \\&\leq\frac{2^r}{2}\left(\langle f^2(|X|)k_{\lambda_{2}},k_{\lambda_{2}}\rangle^\frac{r}{2}\langle g^2(|X^*|)k_{\lambda_{1}},k_{\lambda_{1}}\rangle^\frac{r}{2}
  +\langle f^2(|Y|)k_{\lambda_{1}},k_{\lambda_{1}}\rangle^\frac{r}{2}\langle f^2(|Y^*|)k_{\lambda_{2}},k_{\lambda_{2}}\rangle^\frac{r}{2}\right)
  \\&\qquad\qquad\qquad\qquad\qquad\qquad\qquad\qquad(\textrm {\normalsize{by Lemma}\,\,})
  \\&\leq\frac{2^r}{2}\left(\langle f^{2r}(|X|)k_{\lambda_{2}},k_{\lambda_{2}}\rangle^\frac{1}{2}\langle g^{2r}(|X^*|)k_{\lambda_{1}},k_{\lambda_{1}}\rangle^\frac{1}{2}
  +\langle f^{2r}(|Y|)k_{\lambda_{1}},k_{\lambda_{1}}\rangle^\frac{1}{2}\langle g^{2r}(|Y^*|)k_{\lambda_{2}},k_{\lambda_{2}}\rangle^\frac{1}{2}\right)
  \\&\leq\frac{2^r}{2}\left(\langle f^{2r}(|X|)k_{\lambda_{2}},k_{\lambda_{2}}\rangle+\langle g^{2r}(|X^*|)k_{\lambda_{2}},k_{\lambda_{2}}\rangle\right)^\frac{1}{2}\left(f^{2r}(|Y|)k_{\lambda_{1}},k_{\lambda_{1}}\rangle+\langle g^{2r}(|Y^*|)k_{\lambda_{1}},k_{\lambda_{1}}\rangle\right)^\frac{1}{2}
  \\&=\frac{2^r}{2}\langle \left(f^{2r}(|X|)+g^{2r}(|Y^*|)\right)k_{\lambda_{2}},k_{\lambda_{2}}\rangle^\frac{1}{2}\langle \left(f^{2r}(|Y|)+g^{2r}(|X^*|)\right)k_{\lambda_{1}},k_{\lambda_{1}}\rangle^\frac{1}{2}\\&
  \leq\frac{2^r}{4}\left(\langle \left(f^{2r}(|X|)+g^{2r}(|Y^*|)\right)k_{\lambda_{2}},k_{\lambda_{2}}\rangle+\langle \left(f^{2r}(|Y|)+g^{2r}(|X^*|)\right)k_{\lambda_{1}},k_{\lambda_{1}}\rangle\right)\\&
  \,\,\,\,\,-\frac{2^r}{4}\left(\langle \left(f^{2r}(|X|)+g^{2r}(|Y^*|)\right)k_{\lambda_{2}},k_{\lambda_{2}}\rangle^\frac{1}{2}-\langle \left(f^{2r}(|Y|)+g^{2r}(|X^*|)\right)k_{\lambda_{1}},k_{\lambda_{1}}\rangle^\frac{1}{2}\right)^2\\&
  \qquad\qquad\qquad\qquad\qquad\qquad\qquad\qquad(\textrm {\normalsize{by inequality}\,\,})\\&
  \leq\frac{2^r}{4}\left(\textbf{ber}(f^{2r}(|X|)+g^{2r}(|Y^*|))+\textbf{ber}(f^{2r}(|Y|)+g^{2r}(|X^*|))\right)\\&
  \,\,\,\,\,-\frac{2^r}{4}\left(\langle \left(f^{2r}(|X|)+g^{2r}(|Y^*|)\right)k_{\lambda_{2}},k_{\lambda_{2}}\rangle^\frac{1}{2}-\langle \left(f^{2r}(|Y|)+g^{2r}(|X^*|)\right)k_{\lambda_{1}},k_{\lambda_{1}}\rangle^\frac{1}{2}\right)^2.
  \end{align*}}
 Taking the supremum over all unit vectors $\hat{k}_{(\lambda_{1},\lambda_{2})}=\left[\begin{array}{cc}
 k_{\lambda_{1}}\\
 k_{\lambda_{2}}
 \end{array}\right]
 \in {\mathcal H_1\oplus\mathcal H_2}$, we  get the desired inequality.
 \end{proof}

 \bigskip If we put $X=Y$ in Theorem , then we get next result.\\
 \begin{corollary}
Let $X\in {\mathbb B}({\mathcal H})$ and  $f$, $g$ be nonnegative  continuous  functions on $[0, \infty)$  satisfying the relation $f(t)g(t)=t$ $(t\in [0, \infty))$. Then for $r\geq 1$
{\footnotesize\begin{align*}
 \textbf{ber}^{r}(\left[\begin{array}{cc}
 0&X\\
 X&0
 \end{array}\right])\leq 2^{r-1}\|f^{2r}(|X|)+g^{2r}(|X^*|)\|-2^{r-2}\inf_{\|(k_{\lambda_{1}},k_{\lambda_{2}})\|=1} \eta (k_{\lambda_{1}},k_{\lambda_{2}}),
 \end{align*}}
 where
 {\footnotesize\begin{align*}
 \eta (k_{\lambda_{1}},k_{\lambda_{2}})=\left(\left\langle \left(f^{2r}(|X|)+g^{2r}(|X^*|)\right)k_{\lambda_{2}},k_{\lambda_{2}}\right\rangle^\frac{1}{2}-\left\langle \left(f^{2r}(|X|)+g^{2r}(|X^*|)\right)k_{\lambda_{1}},k_{\lambda_{1}}\right\rangle^\frac{1}{2}\right)^2.
 \end{align*}}
 \end{corollary}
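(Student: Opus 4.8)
The plan is to derive this corollary as a direct specialization of the preceding theorem, taking $Y=X$. Here $X\in{\mathcal B}({\mathcal H})$, so $\mathcal H_1=\mathcal H_2=\mathcal H$ and the off-diagonal matrix acts on $\mathcal H\oplus\mathcal H$. First I would substitute $Y=X$ into the theorem's bound
\begin{align*}
\textbf{ber}^{r}(T)\leq 2^{r-2}\left(\textbf{ber}(f^{2r}(|X|)+g^{2r}(|Y^*|))+\textbf{ber}(f^{2r}(|Y|)+g^{2r}(|X^*|))\right)-2^{r-2}\inf_{\|(k_{\lambda_1},k_{\lambda_2})\|=1}\eta(k_{\lambda_1},k_{\lambda_2}).
\end{align*}
Under this substitution the operator inside the first Berezin number becomes $f^{2r}(|X|)+g^{2r}(|X^*|)$, and the operator inside the second becomes $f^{2r}(|X|)+g^{2r}(|X^*|)$ as well; the two terms are therefore literally identical. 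Consequently the bracketed sum collapses to $2\,\textbf{ber}\!\left(f^{2r}(|X|)+g^{2r}(|X^*|)\right)$, and the prefactor $2^{r-2}$ is promoted to $2^{r-1}$.

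Next I would pass from the Berezin number to the operator norm. Since $f$ and $g$ are nonnegative continuous functions, $f^{2r}(|X|)+g^{2r}(|X^*|)$ is a positive operator, and the elementary inequality $\textbf{ber}(A)\le\|A\|$ recorded in the Introduction gives
\begin{align*}
2^{r-1}\,\textbf{ber}\!\left(f^{2r}(|X|)+g^{2r}(|X^*|)\right)\le 2^{r-1}\bigl\|f^{2r}(|X|)+g^{2r}(|X^*|)\bigr\|,
\end{align*}
which yields the first term on the right-hand side of the claimed inequality. For the correction term I would simply set $Y=X$ in the definition of $\eta$: this replaces $g^{2r}(|Y^*|)$ by $g^{2r}(|X^*|)$ and $f^{2r}(|Y|)$ by $f^{2r}(|X|)$, so both inner products are now evaluated against the same positive operator $f^{2r}(|X|)+g^{2r}(|X^*|)$, producing exactly the $\eta$ displayed in the corollary; the coefficient $2^{r-2}$ in front of the infimum is carried over verbatim.

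I do not anticipate a genuine obstacle, since the argument is purely a substitution together with one application of $\textbf{ber}(A)\le\|A\|$. The only point deserving a moment's care is bookkeeping: one must verify that replacing the two \emph{distinct} Berezin summands by a single one is legitimate precisely because $Y=X$ renders the two operators equal, and that the subsequent weakening $\textbf{ber}\le\|\cdot\|$ enlarges the bound in the correct (upper-bound) direction, so the subtracted correction term is left untouched and the resulting estimate remains valid.
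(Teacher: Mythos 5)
Your proposal is correct and is essentially the paper's own (implicit) argument: the corollary is obtained by setting $Y=X$ in the preceding theorem, whereupon the two Berezin summands coincide, the prefactor becomes $2^{r-1}$, and the passage to the operator norm follows from $\textbf{ber}(A)\leq\|A\|$. The bookkeeping for $\eta$ is also handled exactly as the paper intends.
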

 \section{generalizations of Berezin number an operator}
 In this section, we present some Berezin number inequalities for the generalized Aluthge transform, $\tilde{T}_t$, and then we present some inequalities, which generalized known inequalities.\\
 Let $T=U|T|$ (U is a partial isometry with $kerU=rang|T|^{\bot}$) be the polar decomposition of T.
 For an operator $T\in {\mathcal B}(\mathcal H)$, the generalized Aluthge transform, denoted by, $\tilde{T}_t$, is defined as
 \begin{align*}
 \tilde{T}_t=|T|^{t}U|A|^{1-t},\qquad(0\leq t\leq1).
 \end{align*}
 In the next theorem, we obtain an upper bound for the Berezin number of generalized Aluthge transform of the off-diagonal operator matrix $T=\left[\begin{array}{cc}
 0&X\\
 Y&0
 \end{array}\right]$.
 \begin{theorem}
Let
$T=\left[\begin{array}{cc}
 0&X\\
 Y&0
 \end{array}\right]\in {\mathbb B}({\mathcal H_1\oplus\mathcal H_2})$. Then
 \begin{align}
  \textbf{ber}(\tilde{T}_{t})\leq \frac{1}{2}(\||Y|^{t}|X^{*}|^{1-t}\|+\||X|^{t}|Y^{*}|^{1-t}\|).
 \end{align}
 \end{theorem}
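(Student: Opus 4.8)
The plan is to exploit the off-diagonal structure: I would first compute the polar decomposition of $T$ explicitly, show that $\tilde{T}_t$ is again an off-diagonal matrix, and then reduce everything to Lemma 2.1(b) together with the standard intertwining relation for partial isometries.

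First I would fix the polar decompositions $X=U_X|X|$ and $Y=U_Y|Y|$ of the entries. Since
\[
T^*T=\begin{bmatrix} Y^*Y & 0\\ 0 & X^*X\end{bmatrix},
\qquad\text{we get}\qquad
|T|=\begin{bmatrix} |Y| & 0\\ 0 & |X|\end{bmatrix}.
\]
A direct verification then shows that the partial isometry in $T=U|T|$ is $U=\begin{bmatrix} 0 & U_X\\ U_Y & 0\end{bmatrix}$: indeed $U|T|=T$, the operator $U^*U=\begin{bmatrix} U_Y^*U_Y & 0\\ 0 & U_X^*U_X\end{bmatrix}$ is a projection, and $\ker U=\ker|Y|\oplus\ker|X|=(\operatorname{ran}|T|)^{\perp}$, which is exactly the kernel condition required of $U$.

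Next I would carry out the block multiplication in $\tilde{T}_t=|T|^tU|T|^{1-t}$, which yields
\[
\tilde{T}_t=\begin{bmatrix} 0 & |Y|^t U_X |X|^{1-t}\\ |X|^t U_Y |Y|^{1-t} & 0\end{bmatrix}.
\]
This is again off-diagonal, with corner entries $\widetilde{X}:=|Y|^t U_X |X|^{1-t}$ and $\widetilde{Y}:=|X|^t U_Y |Y|^{1-t}$, so Lemma 2.1(b) immediately gives $\textbf{ber}(\tilde{T}_t)\le\frac12\bigl(\|\widetilde{X}\|+\|\widetilde{Y}\|\bigr)$. It then remains only to estimate the two corner norms. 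For this I would invoke the intertwining identity $U_X|X|^{s}=|X^*|^{s}U_X$ (valid for $s\ge 0$ because $|X^*|=U_X|X|U_X^*$ and $U_X^*U_X$ acts as the identity on $\overline{\operatorname{ran}|X|}$). This rewrites $\widetilde{X}=|Y|^t|X^*|^{1-t}U_X$, and since $U_X$ is a partial isometry ($\|U_X\|\le 1$) we obtain $\|\widetilde{X}\|\le\||Y|^t|X^*|^{1-t}\|$; the symmetric computation gives $\|\widetilde{Y}\|\le\||X|^t|Y^*|^{1-t}\|$. Substituting these into the bound from Lemma 2.1(b) produces exactly the claimed inequality.

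The main obstacle is getting the polar decomposition of the block operator correct, in particular verifying the kernel condition on $U$ and justifying the functional-calculus identity $U_X|X|^{s}=|X^*|^{s}U_X$ on the appropriate closed range; once these structural facts are in place, the block multiplications and the norm estimate via $\|U_X\|\le 1$ are entirely routine.
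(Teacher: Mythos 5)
Your proposal is correct and follows essentially the same route as the paper: the same block polar decomposition $U=\left[\begin{smallmatrix}0&U_X\\ U_Y&0\end{smallmatrix}\right]$, $|T|=\left[\begin{smallmatrix}|Y|&0\\ 0&|X|\end{smallmatrix}\right]$, the same computation of $\tilde{T}_t$ as an off-diagonal matrix, the appeal to Lemma 2.1(b), and the same intertwining relation (the paper writes it as $|X|^{1-t}=U^{*}|X^{*}|^{1-t}U$, which is equivalent to your $U_X|X|^{s}=|X^{*}|^{s}U_X$). No substantive differences.
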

 \begin{proof}
 Let $X=U|X|$ and $Y=V|Y|$ be the polar decompositions of the operators X and Y. Then
 \begin{align*}
 \left[\begin{array}{cc}
 0&X\\
 Y&0
 \end{array}\right]=\left[\begin{array}{cc}
 0&U\\
 V&0
 \end{array}\right]\left[\begin{array}{cc}
 |Y|&0\\
 0&|X|
 \end{array}\right]
 \end{align*}
 is the polar decomposition of T. The generalized Aluthge transform of T is
 \begin{align*}
 \tilde{T}_{t}=|T|^{t}\left[\begin{array}{cc}
 0&U\\
 V&0
 \end{array}\right]|T|^{1-t}&=\left[\begin{array}{cc}
 |Y|^{t}&0\\
 0&|X|^{t}
 \end{array}\right]\left[\begin{array}{cc}
 0&U\\
 V&0
 \end{array}\right]
 \left[\begin{array}{cc}
 |Y|^{1-t}&0\\
 0&|X|^{1-t}
 \end{array}\right]\\&
 =\left[\begin{array}{cc}
 0&|Y|^{t}U|X|^{1-t}\\
 |X|^{t}V|Y|^{1-t}&0
 \end{array}\right].
 \end{align*}
 So
 \begin{align*}
 \textbf{ber}(\tilde{T}_{t})&=\textbf{ber}\left(\left[\begin{array}{cc}
 0&|Y|^{t}U|X|^{1-t}\\
 |X|^{t}V|Y|^{1-t}&0
 \end{array}\right]\right)\\&
 \leq \frac{1}{2}(\||Y|^{t}U|X|^{1-t}\|+\||X|^{t}V|Y|^{1-t}\|).\qquad\qquad (\textrm {by Lemma })
 \end{align*}
 Since, $|X^{*}|^{2}=XX^{*}=U|X|^{2}U^{*}$, so $|X|^{1-t}=U^{*}|X^{*}|^{1-t}U$. Thus, $\||Y|^{t}U|X|^{1-t}\|=\||Y|^{t}UU^{*}|X^{*}|^{1-t}U\|=\||Y|^{t}|X^{*}|^{1-t}\|$. Similarly, $\||X|^{t}V|Y|^{1-t}\|=\||X|^{t}|Y^{*}|^{1-t}\|$. Therefore
 \begin{align*}
 \textbf{ber}(\tilde{T}_{t})\leq \frac{1}{2}(\||Y|^{t}|X^{*}|^{1-t}\|+\||X|^{t}|Y^{*}|^{1-t}\|).
 \end{align*}
 \end{proof}
\begin{theorem}
Let
$T\in {\mathcal B}({\mathcal H})$. Then
\begin{align}
\textbf{ber}(T)\leq \frac{1}{4}\| |T|^{2t}+|T|^{2(1-t)}\|+\frac{1}{2}\textbf{ber}(\tilde{T}_{t}),
\end{align}
where $t\in[0,1]$.
 \end{theorem}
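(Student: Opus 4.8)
The plan is to reduce the inequality to a pointwise Berezin-symbol estimate and then exploit the polar decomposition $T=U|T|$ through a single well-chosen factorization. Writing $|T|=|T|^{1-t}\,|T|^{t}$, I would set $A=U|T|^{1-t}$ and $B=|T|^{t}$, so that $AB=U|T|=T$ and $BA=|T|^{t}U|T|^{1-t}=\tilde T_{t}$. The virtue of this splitting is that it reproduces exactly the right-hand side: since $B=B^{*}\ge 0$ we get $BB^{*}=|T|^{2t}$, while $U^{*}U$ is the orthogonal projection onto $\overline{\mathrm{ran}}\,|T|$, which contains $\mathrm{ran}\,|T|^{1-t}$, so that $A^{*}A=|T|^{1-t}U^{*}U|T|^{1-t}=|T|^{2(1-t)}$. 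Hence $A^{*}A+BB^{*}=|T|^{2t}+|T|^{2(1-t)}$, and the theorem becomes the operator statement
\[
\textbf{ber}(AB)\le \tfrac14\|A^{*}A+BB^{*}\|+\tfrac12\,\textbf{ber}(BA).
\]

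Next I would explain why a single Cauchy--Schwarz step is insufficient and how $\tilde T_{t}$ must enter. For a normalized reproducing kernel $x=\hat k_{\lambda}$ one has $\langle Tx,x\rangle=\langle Bx,A^{*}x\rangle$, so Lemma 2.3 (or plain Cauchy--Schwarz) only yields $|\langle Tx,x\rangle|^{2}\le\langle|T|^{2t}x,x\rangle\,\langle|T^{*}|^{2(1-t)}x,x\rangle$, which produces the \emph{wrong} factor $|T^{*}|^{2(1-t)}$ instead of the intrinsic $|T|^{2(1-t)}$. Replacing $|T^{*}|$ by $|T|$ is precisely what the correcting term $\tfrac12\textbf{ber}(\tilde T_{t})$ pays for. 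To obtain it I would use the Cartesian/phase representation of Lemma 2.1(c), $\textbf{ber}(AB)=\sup_{\theta}\textbf{ber}(\mathrm{Re}(e^{i\theta}AB))$, together with the completed square
\[
0\le (A^{*}-e^{i\theta}B)(A^{*}-e^{i\theta}B)^{*}=A^{*}A+BB^{*}-2\,\mathrm{Re}(e^{i\theta}BA),
\]
which already gives $\textbf{ber}(BA)\le\tfrac12\|A^{*}A+BB^{*}\|$, and then a Buzano-type refinement of Cauchy--Schwarz to interpolate $\langle ABx,x\rangle$ between the diagonal term $\langle(A^{*}A+BB^{*})x,x\rangle$ and the genuine symbol $\langle BAx,x\rangle=\langle\tilde T_{t}x,x\rangle$. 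Taking the supremum over the kernels $\hat k_{\lambda}$ and using $\textbf{ber}(P)\le\|P\|$ on the positive operator $A^{*}A+BB^{*}$ would then close the argument.

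The hard part will be the crux inequality $\textbf{ber}(AB)\le\tfrac14\|A^{*}A+BB^{*}\|+\tfrac12\textbf{ber}(BA)$, whose difficulty is that it is genuinely \emph{global} rather than pointwise. Indeed, one can choose unit vectors at which $|\langle ABx,x\rangle|$ is large while both $\langle(A^{*}A+BB^{*})x,x\rangle$ and $\langle BAx,x\rangle$ vanish, so no single-vector operator inequality $\mathrm{Re}(e^{i\theta}AB)\le\tfrac14(A^{*}A+BB^{*})+\tfrac12\mathrm{Re}(e^{i\theta}BA)$ can hold; the term $\tfrac12\textbf{ber}(\tilde T_{t})$ is forced to absorb the contribution of the remaining kernels. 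Thus the estimate cannot be produced by one Cauchy--Schwarz bound and must be routed through the phase supremum of Lemma 2.1(c) and a Buzano-type inequality, so that $\tilde T_{t}$ enters with the correct weight $\tfrac12$. A secondary subtlety is that every intermediate step has to be phrased through reproducing kernels, since for the Berezin number only Lemma 2.1(c) and the bound $\textbf{ber}(\cdot)\le\|\cdot\|$ are available in place of the norm and spectral identities one would invoke for the classical numerical radius.
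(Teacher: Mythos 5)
Your reduction is sound and coincides with the paper's starting point: with $A=U|T|^{1-t}$ and $B=|T|^{t}$ one has $AB=T$, $BA=\tilde T_{t}$, $A^{*}A=|T|^{2(1-t)}$, $BB^{*}=|T|^{2t}$, so the theorem is equivalent to $\textbf{ber}(AB)\le\frac14\|A^{*}A+BB^{*}\|+\frac12\textbf{ber}(BA)$. The genuine gap is that you never prove this crux inequality. The only concrete estimate you write down, $0\le(A^{*}-e^{i\theta}B)(A^{*}-e^{i\theta}B)^{*}=A^{*}A+BB^{*}-2\,\mathrm{Re}(e^{i\theta}BA)$, runs in the wrong direction: it controls the Aluthge transform by the diagonal term, whereas the theorem controls $T$ by the diagonal term \emph{plus} the Aluthge transform. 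The remaining step is delegated to an unspecified ``Buzano-type refinement,'' but Buzano's inequality bounds $|\langle a,e\rangle\langle e,b\rangle|$ by $\frac12(\|a\|\,\|b\|+|\langle a,b\rangle|)$, and there is no evident choice of $a,b,e$ that converts $\langle ABx,x\rangle=\langle Bx,A^{*}x\rangle$ into an expression involving $\langle BAx,x\rangle$. That conversion is the entire content of the theorem, so what you have is a correct change of variables followed by a placeholder where the proof should be. (Also, your claim that one can make $|\langle ABx,x\rangle|$ large while $\langle(A^{*}A+BB^{*})x,x\rangle$ vanishes is false here: if that quadratic form vanishes then $Bx=0$, hence $ABx=0$.)

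For comparison, the paper's route is the polarization identity: $\mathrm{Re}\langle e^{i\theta}T\hat k_{\lambda},\hat k_{\lambda}\rangle=\mathrm{Re}\langle e^{i\theta}|T|^{1-t}\hat k_{\lambda},|T|^{t}U^{*}\hat k_{\lambda}\rangle\le\frac14\|(e^{i\theta}|T|^{1-t}+|T|^{t}U^{*})\hat k_{\lambda}\|^{2}$, after which the square is expanded so that the cross terms yield $2\,\mathrm{Re}(e^{i\theta}\tilde T_{t})$ and the diagonal terms yield $|T|^{2t}+|T|^{2(1-t)}$; the proof then finishes with $\textbf{ber}(P)\le\|P\|$ and Lemma 2.1(c). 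You should be aware that your (correct) observation that the operator inequality $\mathrm{Re}(e^{i\theta}AB)\le\frac14(A^{*}A+BB^{*})+\frac12\mathrm{Re}(e^{i\theta}BA)$ can fail, e.g.\ for the $2\times2$ nilpotent Jordan block with $t=\frac12$, where $\tilde T_{1/2}=0$, cuts against the paper as well: the expansion there identifies $\|S\hat k_{\lambda}\|^{2}$ with $\langle SS^{*}\hat k_{\lambda},\hat k_{\lambda}\rangle$ rather than $\langle S^{*}S\hat k_{\lambda},\hat k_{\lambda}\rangle$, and the classical numerical-radius argument repairs this only by passing through the norm identity $\|S^{*}S\|=\|SS^{*}\|$, a global step that does not restrict to reproducing kernels. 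So the step you left open is not a routine detail; it is precisely where the difficulty of the statement lives, and your sketch does not supply it.
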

 \begin{proof}
Let $\hat{k}_{\lambda}\in \mathcal{H}$. We have
 \begin{align*}
 &\texttt{Re}\langle e^{i\theta}T\hat{k}_{\lambda},\hat{k}_{\lambda}\rangle\\&=\texttt{Re}\langle e^{i\theta}U|T|\hat{k}_{\lambda},\hat{k}_{\lambda}\rangle\\&
 =\texttt{Re}\langle e^{i\theta}U |T|^{t}|T|^{1-t}\hat{k}_{\lambda},\hat{k}_{\lambda}\rangle\\&
 =\texttt{Re}\langle e^{i\theta}|T|^{1-t}\hat{k}_{\lambda},|T|^{t}U^{*}\hat{k}_{\lambda}\rangle\\&
 =\frac{1}{4}\|(e^{i\theta}|T|^{1-t}+|T|^{t}U^{*})\hat{k}_{\lambda}\|^{2}-\frac{1}{4}\|(e^{i\theta}|T|^{1-t}-|T|^{t}U^{*})\hat{k}_{\lambda}\|^{2}\\&
 \qquad\qquad\qquad (\textrm {by the polarization identity})\\&
 \leq \frac{1}{4}\|(e^{i\theta}|T|^{1-t}+|T|^{t}U^{*})\hat{k}_{\lambda}\|^{2}\\&
 =\frac{1}{4}\langle(e^{i\theta}|T|^{1-t}+|T|^{t}U^{*})\hat{k}_{\lambda},(e^{i\theta}|T|^{1-t}+|T|^{t}U^{*})\hat{k}_{\lambda}\rangle\\&
 =\frac{1}{4}\langle(e^{i\theta}|T|^{1-t}+|T|^{t}U^{*})(e^{-i\theta}|T|^{1-t}+U|T|^{t})\hat{k}_{\lambda},\hat{k}_{\lambda}\rangle\\&
 =\frac{1}{4}\langle|T|^{2t}+|T|^{2(1-t)}+e^{i\theta}\tilde{T}_{t}+e^{-i\theta}(\tilde{T}_{t})^{*}\hat{k}_{\lambda},\hat{k}_{\lambda}\rangle\\&
=\frac{1}{4}\langle|T|^{2t}+|T|^{2(1-t)}\hat{k}_{\lambda},\hat{k}_{\lambda}\rangle+
\frac{1}{4}\langle e^{i\theta}\tilde{T}_{t}+e^{-i\theta}(\tilde{T}_{t})^{*}\hat{k}_{\lambda},\hat{k}_{\lambda}\rangle\\&
=\frac{1}{4}\langle|T|^{2t}+|T|^{2(1-t)}\hat{k}_{\lambda},\hat{k}_{\lambda}\rangle+
\frac{1}{2}\langle\texttt{Re}(e^{i\theta}\tilde{T}_{t})\hat{k}_{\lambda},\hat{k}_{\lambda}\rangle\\&
\leq\frac{1}{4}\||T|^{2t}+|T|^{2(1-t)}\|+
\frac{1}{2}\textbf{ber}(\texttt{Re}(e^{i\theta}\tilde{T}_{t})\\&
 \leq\frac{1}{4}\||T|^{2t}+|T|^{2(1-t)}\|+\frac{1}{2}\textbf{ber}(\tilde{T}_{t}).
 \end{align*}
 By taking the supremum over $\lambda\in\Omega$, we get the desired result.
\end{proof}
\begin{remark}
By putting $t=\frac{1}{2}$ in Theorem , we get
$T\in {\mathcal B}({\mathcal H})$. Then
\begin{align}
\textbf{ber}(T)\leq \frac{1}{2}\| T\|+\frac{1}{2}\textbf{ber}(\tilde{T}_{t})
\end{align}
\end{remark}
From Theorem  we can get the next result for the off-diagonal operator matrix $T=\left[\begin{array}{cc}
 0&X\\
 Y&0
 \end{array}\right]$.
 \begin{corollary}
 Let
$T=\left[\begin{array}{cc}
 0&X\\
 Y&0
 \end{array}\right]\in {\mathcal B}({\mathcal H_1\oplus\mathcal H_2})$. Then
 \begin{align}
\textbf{ber}(T)\leq \frac{1}{2}\max(\|X\|,\|Y\|)+\frac{1}{4}(\||Y|^{t}|X^{*}|^{1-t}\|+\||X|^{t}|Y^{*}|^{1-t}\|).
 \end{align}
\end{corollary}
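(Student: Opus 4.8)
The plan is to obtain the bound by feeding the two estimates already proved for the off-diagonal matrix $T=\left[\begin{array}{cc}0&X\\Y&0\end{array}\right]$ into one another. Concretely, I would start from the inequality $\textbf{ber}(T)\leq\frac12\|T\|+\frac12\,\textbf{ber}(\tilde{T}_{t})$ recorded in the preceding remark, and then control the Aluthge-transform term by the bound $\textbf{ber}(\tilde{T}_{t})\leq\frac12\left(\||Y|^{t}|X^{*}|^{1-t}\|+\||X|^{t}|Y^{*}|^{1-t}\|\right)$ established in the Aluthge-transform theorem. The only piece of genuinely new work is to express $\|T\|$ through $X$ and $Y$.

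So first I would compute the operator norm of the off-diagonal matrix. Since $T^{*}T=\left[\begin{array}{cc}Y^{*}Y&0\\0&X^{*}X\end{array}\right]$ is block diagonal and positive, its norm equals the larger of the norms of its diagonal blocks, whence $\|T\|^{2}=\|T^{*}T\|=\max\{\|Y^{*}Y\|,\|X^{*}X\|\}=\max\{\|X\|^{2},\|Y\|^{2}\}$ and therefore $\|T\|=\max\{\|X\|,\|Y\|\}$. Substituting this into $\textbf{ber}(T)\leq\frac12\|T\|+\frac12\,\textbf{ber}(\tilde{T}_{t})$ turns the first summand into $\frac12\max\{\|X\|,\|Y\|\}$, exactly the leading term of the claimed bound.

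Finally I would bound the remaining $\frac12\,\textbf{ber}(\tilde{T}_{t})$ from above by the Aluthge-transform estimate, which yields $\frac14\left(\||Y|^{t}|X^{*}|^{1-t}\|+\||X|^{t}|Y^{*}|^{1-t}\|\right)$, and add the two contributions to reach the stated inequality. The step requiring the most care is not analytical but a matter of keeping the parameter $t$ consistent across the two inputs: the remark is phrased as the case $t=\frac12$, so one must read it in its displayed form, with the Aluthge transform $\tilde{T}_{t}$ carrying the same $t$ that appears in the final off-diagonal estimate. Once that alignment is fixed, the computation of $\|T\|$ is the only substantive step and everything else is direct substitution.
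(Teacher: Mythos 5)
Your route is the one the paper intends: its entire proof is ``immediately deduced from Theorem'', and your chain --- the Remark (or Theorem 3.2), the Aluthge-transform bound $\textbf{ber}(\tilde T_t)\le\frac12(\||Y|^{t}|X^{*}|^{1-t}\|+\||X|^{t}|Y^{*}|^{1-t}\|)$, and the computation $\|T\|=\max(\|X\|,\|Y\|)$ from $T^{*}T=\mathrm{diag}(Y^{*}Y,X^{*}X)$ --- is exactly that deduction, with the one substantive detail (the operator norm of the off-diagonal block matrix) correctly supplied.

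However, the step you yourself flag as delicate is where the argument genuinely breaks for general $t$. The inequality $\textbf{ber}(T)\le\frac12\|T\|+\frac12\,\textbf{ber}(\tilde T_t)$ is only established in the paper for $t=\frac12$: the Remark is obtained ``by putting $t=\frac12$'' in the preceding theorem, so the $\tilde T_t$ surviving in its display must be read as $\tilde T_{1/2}$; taking the Remark ``in its displayed form'' for arbitrary $t$ assumes an unproved statement. Nor can you recover the general-$t$ version from the theorem itself: there the first term is $\frac14\||T|^{2t}+|T|^{2(1-t)}\|$, and since $|T|=\mathrm{diag}(|Y|,|X|)$ this equals $\frac14\max\bigl(\|X\|^{2t}+\|X\|^{2(1-t)},\,\|Y\|^{2t}+\|Y\|^{2(1-t)}\bigr)$, which by the AM--GM inequality is $\ge\frac12\max(\|X\|,\|Y\|)$, with equality essentially only at $t=\frac12$ --- the substitution you need goes in the wrong direction. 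So your proof (and the corollary as stated, which is equally loose about $t$) is valid only at $t=\frac12$; for general $t\in[0,1]$ the first summand must be replaced by $\frac14\max\bigl(\|X\|^{2t}+\|X\|^{2(1-t)},\,\|Y\|^{2t}+\|Y\|^{2(1-t)}\bigr)$, or an independent proof of the Remark for general $t$ must be supplied.
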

\begin{proof}
It is immediately deduced from Theorem .
\end{proof}
\begin{corollary}
Let
$T=\left[\begin{array}{cc}
 0&X\\
 Y&0
 \end{array}\right]\in {\mathcal B}({\mathcal H_1\oplus\mathcal H_2})$. Then
\begin{align}
\|X+Y\|\leq \max(\|X\|,\|Y\|)+\frac{1}{2}\left(\||X|^{1/2}|Y|^{1/2}\|+\||X^*|^{1/2}|Y^{*}|^{1/2}\|\right).
\end{align}
\end{corollary}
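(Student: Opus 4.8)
The plan is to express $\|X+Y\|$ through the numerical radius of a single off-diagonal operator matrix, and then to apply the operator-norm analogue of the preceding corollary. Throughout, write $w(A)=\sup_{\|u\|=1}|\langle Au,u\rangle|$ for the numerical radius. The starting observation is that every inequality proved above via normalized reproducing kernels (in particular the Aluthge-transform estimate and the preceding corollary) remains valid with $\textbf{ber}$ replaced by $w$ and $\textbf{ber}(\tilde T_{t})$ by $w(\tilde T_{t})$: the proofs are literally unchanged once the supremum is taken over all unit vectors of $\mathcal H_1\oplus\mathcal H_2$ instead of over the $\hat k_{(\lambda_1,\lambda_2)}$. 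I shall take these numerical-radius versions for granted.

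First I would introduce $S=\left[\begin{smallmatrix} 0 & X \\ Y^{*} & 0\end{smallmatrix}\right]$, the point being that its off-diagonal entries recombine into $X+Y$ (rather than $X+Y^{*}$). Using the numerical-radius analogue of Lemma (c), namely $w(S)=\sup_{\theta\in\mathbb R}\|\mathrm{Re}(e^{i\theta}S)\|$, and computing $\mathrm{Re}(e^{i\theta}S)=\tfrac12\left[\begin{smallmatrix} 0 & W_{\theta} \\ W_{\theta}^{*} & 0\end{smallmatrix}\right]$ with $W_{\theta}=e^{i\theta}X+e^{-i\theta}Y$, the identity $\|\left[\begin{smallmatrix} 0 & W \\ W^{*} & 0\end{smallmatrix}\right]\|=\|W\|$ gives $\|\mathrm{Re}(e^{i\theta}S)\|=\tfrac12\|e^{i\theta}X+e^{-i\theta}Y\|$. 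Choosing $\theta=0$ then yields the lower estimate $\tfrac12\|X+Y\|\le w(S)$.

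Next I would bound $w(S)$ from above by applying the numerical-radius version of the preceding corollary to $S$ at the parameter $t=\tfrac12$; in effect this replaces the data $(X,Y)$ there by $(X,Y^{*})$. At $t=\tfrac12$ one has $|S|^{2t}+|S|^{2(1-t)}=2|S|$, so $\tfrac14\||S|^{2t}+|S|^{2(1-t)}\|=\tfrac12\|S\|=\tfrac12\max(\|X\|,\|Y\|)$, while the Aluthge-transform estimate contributes $\tfrac14\big(\||Y^{*}|^{1/2}|X^{*}|^{1/2}\|+\||X|^{1/2}|Y|^{1/2}\|\big)$, where I used $|Y^{**}|=|Y|$ and $\|Y^{*}\|=\|Y\|$. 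Thus $w(S)\le \tfrac12\max(\|X\|,\|Y\|)+\tfrac14\big(\||Y^{*}|^{1/2}|X^{*}|^{1/2}\|+\||X|^{1/2}|Y|^{1/2}\|\big)$.

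Finally I would simplify and combine. Since $|X^{*}|^{1/2}$ and $|Y^{*}|^{1/2}$ are self-adjoint, taking adjoints gives $\||Y^{*}|^{1/2}|X^{*}|^{1/2}\|=\||X^{*}|^{1/2}|Y^{*}|^{1/2}\|$, so the upper bound for $w(S)$ is exactly one half of the claimed right-hand side. Chaining it with $\tfrac12\|X+Y\|\le w(S)$ and multiplying by $2$ produces the assertion. The only genuine obstacle is the conceptual one flagged at the outset, namely legitimizing the transfer from the Berezin-number inequalities to their numerical-radius counterparts; everything else is the small but essential bookkeeping trick of feeding $Y^{*}$ (not $Y$) into the auxiliary matrix $S$, so that the choice $\theta=0$ regenerates $X+Y$.
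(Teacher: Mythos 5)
Your proof is correct, and it follows the route the paper gestures at rather than a genuinely different one: the paper's printed proof consists of the single fragment $\|X+Y^{*}\|=\|T+T^{*}\|$ followed by an (implicit) appeal to the preceding corollary at $t=\tfrac12$, i.e.\ it identifies the quantity to be bounded with $2\|\mathrm{Re}(T)\|$ and then invokes the bound on $\textbf{ber}(T)$. Your version supplies the two things that fragment omits. First, the substitution $Y\mapsto Y^{*}$ (your auxiliary matrix $S$) is needed to turn $X+Y^{*}$ into the $X+Y$ of the statement and to produce the terms $\||X|^{1/2}|Y|^{1/2}\|$ and $\||X^{*}|^{1/2}|Y^{*}|^{1/2}\|$; the paper leaves this implicit, and its displayed line does not actually match the inequality being proved. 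Second, and more substantively, passing from a Berezin-number bound to an operator-norm conclusion requires $\|\mathrm{Re}(e^{i\theta}S)\|$ to be controlled by the supremum over \emph{all} unit vectors: one only has $\textbf{ber}(A)\le w(A)$, not the reverse, since the normalized reproducing kernels need not be norming. Your observation that the proofs of the Aluthge-transform theorem and of the preceding corollary use only that $\hat{k}_{(\lambda_1,\lambda_2)}$ is a unit vector, and hence hold verbatim for the numerical radius $w$, is exactly the repair this step needs; with it the chain $\tfrac12\|X+Y\|=\|\mathrm{Re}(S)\|\le w(S)\le \tfrac12\max(\|X\|,\|Y\|)+\tfrac14\left(\||X|^{1/2}|Y|^{1/2}\|+\||X^{*}|^{1/2}|Y^{*}|^{1/2}\|\right)$ closes correctly (using $\||Y^{*}|^{1/2}|X^{*}|^{1/2}\|=\||X^{*}|^{1/2}|Y^{*}|^{1/2}\|$ by taking adjoints). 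In short: same strategy as the paper, but your write-up is the complete version of it, and on the $\textbf{ber}$-versus-$w$ point it is the only correct one. The sole remaining pedantic remark is that the statement itself tacitly assumes $\mathcal H_1=\mathcal H_2$, since otherwise $X+Y$ and your matrix $S$ are not defined.
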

\begin{proof}
Applying Theorem, we have
\begin{align*}
\|X+Y^*\|&=\|T+T^{*}\|.
\end{align*}
\end{proof}

In the following we present some Berezin number inequalities for the operator matrix $T=\left[\begin{array}{cc}
	S&X\\
	Y&R
	\end{array}\right].$\\

\begin{theorem}
	Let
	$T=\left[\begin{array}{cc}
	S&X\\
	Y&R
	\end{array}\right]\in {\mathcal B}({\mathcal H_1\oplus\mathcal H_2})$.
	Then
	{\footnotesize\begin{align}
	\textbf{ber}(T)\leq \frac{1}{2}\textbf{ber}(S)+\textbf{ber}(R)+\frac{1}{2}\sqrt{\alpha^2\textbf{ber}^2(S)+\|X\|^2}+\frac{1}{2}\sqrt{(1-\alpha)^2\textbf{ber}^2(S)+\|Y\|^2}
	\end{align}}
	for $0\leq \alpha\leq 1$.
\end{theorem}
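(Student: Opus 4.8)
The plan is to split $T$ into three pieces whose Berezin numbers add up, using the subadditivity and homogeneity of $\textbf{ber}$ recalled in the introduction. For the fixed parameter $\alpha\in[0,1]$ I would write
$$T=\begin{bmatrix} \alpha S & X\\ 0 & 0\end{bmatrix}+\begin{bmatrix}(1-\alpha)S & 0\\ Y & 0\end{bmatrix}+\begin{bmatrix} 0 & 0\\ 0 & R\end{bmatrix}=:T_1+T_2+T_3,$$
so that $\textbf{ber}(T)\le \textbf{ber}(T_1)+\textbf{ber}(T_2)+\textbf{ber}(T_3)$. The last term is immediately controlled by Lemma 2.1(a): $\textbf{ber}(T_3)\le\max\{\textbf{ber}(0),\textbf{ber}(R)\}=\textbf{ber}(R)$. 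Everything then reduces to estimating the two triangular blocks $T_1$ and $T_2$.

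For the upper-triangular block the key estimate I would establish is that, for $M=\begin{bmatrix} A & B\\ 0 & 0\end{bmatrix}$,
$$\textbf{ber}(M)\le \tfrac12\,\textbf{ber}(A)+\tfrac12\sqrt{\textbf{ber}^2(A)+\|B\|^2}.$$
To prove this, evaluate the Berezin symbol on a normalized reproducing kernel $\hat k_{(\lambda_1,\lambda_2)}=\begin{bmatrix}k_{\lambda_1}\\ k_{\lambda_2}\end{bmatrix}$ with $\|k_{\lambda_1}\|^2+\|k_{\lambda_2}\|^2=1$, which gives
$$\bigl|\langle M\hat k_{(\lambda_1,\lambda_2)},\hat k_{(\lambda_1,\lambda_2)}\rangle\bigr|\le |\langle A k_{\lambda_1},k_{\lambda_1}\rangle|+|\langle B k_{\lambda_2},k_{\lambda_1}\rangle|\le \textbf{ber}(A)\,\|k_{\lambda_1}\|^2+\|B\|\,\|k_{\lambda_1}\|\,\|k_{\lambda_2}\|,$$
where the first bound uses that $k_{\lambda_1}/\|k_{\lambda_1}\|$ is a normalized reproducing kernel of $\mathcal H_1$ and the second is the Cauchy--Schwarz inequality. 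Writing $\|k_{\lambda_1}\|=\cos\phi$ and $\|k_{\lambda_2}\|=\sin\phi$, the right-hand side equals $\tfrac12\textbf{ber}(A)+\tfrac12\bigl(\textbf{ber}(A)\cos 2\phi+\|B\|\sin 2\phi\bigr)$, whose supremum over $\phi$ is exactly $\tfrac12\textbf{ber}(A)+\tfrac12\sqrt{\textbf{ber}^2(A)+\|B\|^2}$. Taking the supremum over $(\lambda_1,\lambda_2)$ yields the claim; the same argument applied to the adjoint (or directly, interchanging the roles of the two coordinates, since $\textbf{ber}(M^{*})=\textbf{ber}(M)$) gives the companion bound $\textbf{ber}\begin{bmatrix} C & 0\\ D & 0\end{bmatrix}\le \tfrac12\textbf{ber}(C)+\tfrac12\sqrt{\textbf{ber}^2(C)+\|D\|^2}$.

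Applying these two estimates with $(A,B)=(\alpha S,X)$ and $(C,D)=((1-\alpha)S,Y)$, and using $\textbf{ber}(\alpha S)=\alpha\,\textbf{ber}(S)$, produces
$$\textbf{ber}(T_1)\le \tfrac{\alpha}{2}\textbf{ber}(S)+\tfrac12\sqrt{\alpha^2\textbf{ber}^2(S)+\|X\|^2},\qquad \textbf{ber}(T_2)\le \tfrac{1-\alpha}{2}\textbf{ber}(S)+\tfrac12\sqrt{(1-\alpha)^2\textbf{ber}^2(S)+\|Y\|^2}.$$
Summing the three contributions collapses $\tfrac{\alpha}{2}\textbf{ber}(S)+\tfrac{1-\alpha}{2}\textbf{ber}(S)=\tfrac12\textbf{ber}(S)$ and gives precisely the asserted inequality. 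I expect the main obstacle to be the triangular-block estimate: one must treat the reproducing-kernel normalization carefully so that the diagonal term genuinely contributes $\textbf{ber}(A)\cos^2\phi$ rather than merely $\|A\|\cos^2\phi$, and then carry out the elementary but essential optimization $\max_{\phi}\bigl(\textbf{ber}(A)\cos 2\phi+\|B\|\sin 2\phi\bigr)=\sqrt{\textbf{ber}^2(A)+\|B\|^2}$, which is exactly where the square-root terms in the statement originate.
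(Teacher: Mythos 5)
Your argument is correct, and it reaches the stated bound by a genuinely different route from the paper. The paper first invokes Lemma 2.1(c) to write $\textbf{ber}(T)=\sup_{\theta}\textbf{ber}(\texttt{Re}(e^{i\theta}T))$, splits the self-adjoint operator $2\texttt{Re}(e^{i\theta}T)$ into the three corresponding self-adjoint blocks (the real parts of your $T_1,T_2,T_3$), majorizes each block's Berezin number by the norm of an entrywise scalar matrix such as $\left[\begin{array}{cc} 2\alpha\,\textbf{ber}(S)&\|X\|\\ \|X\|&0\end{array}\right]$, and reads off the largest eigenvalue $\alpha\,\textbf{ber}(S)+\sqrt{\alpha^2\textbf{ber}^2(S)+\|X\|^2}$ before taking the supremum over $\theta$. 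You instead decompose $T$ itself and prove the triangular-block estimate $\textbf{ber}\left(\left[\begin{array}{cc}A&B\\0&0\end{array}\right]\right)\leq\frac{1}{2}\textbf{ber}(A)+\frac{1}{2}\sqrt{\textbf{ber}^2(A)+\|B\|^2}$ from scratch, by evaluating on a normalized kernel and optimizing $\textbf{ber}(A)\cos^2\phi+\|B\|\cos\phi\sin\phi$; the eigenvalue computation and your trigonometric maximum are of course the same elementary fact in two guises. What your version buys is self-containedness: it bypasses both the reduction to real parts and the scalar-matrix dominance step (which the paper attributes to a ``Theorem 2.1'' it never actually states), at the cost of having to justify directly that the diagonal term contributes $\textbf{ber}(A)\|k_{\lambda_1}\|^2$ rather than $\|A\|\,\|k_{\lambda_1}\|^2$ --- which you do correctly, since $k_{\lambda_1}/\|k_{\lambda_1}\|$ is again a normalized reproducing kernel (the degenerate case $k_{\lambda_1}=0$ being trivial). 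The only caveat, shared equally with the paper, is the implicit convention that the normalized kernels of $\mathcal H_1\oplus\mathcal H_2$ are exactly the pairs $(k_{\lambda_1},k_{\lambda_2})$ with $\|k_{\lambda_1}\|^2+\|k_{\lambda_2}\|^2=1$; granting that convention, your proof is complete.
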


\begin{proof}
	\begin{align*}
		&\textbf{ber}(\texttt{Re}(e^{i\theta}T))\\&=	\textbf{ber}\left(\frac{e^{i\theta}T+e^{-i\theta}T^*}{2}\right)\\
		&=	\frac{1}{2}\textbf{ber}\left[\begin{array}{cc}
		2\texttt{Re}(e^{i\theta}S)&e^{i\theta}X+e^{-i\theta}Y^*\\
	e^{i\theta}Y+e^{-i\theta}X^*&2\texttt{Re}(e^{i\theta}R)
		\end{array}\right]\\
		&\leq\frac{1}{2} \bigg(\textbf{ber}\left[\begin{array}{cc}
		2\alpha\texttt{Re}(e^{i\theta}S)&e^{i\theta}X\\
		e^{-i\theta}X^*&0
		\end{array}\right]+\textbf{ber}\left[\begin{array}{cc}
		2(1-\alpha)\texttt{Re}(e^{i\theta}S)&e^{-i\theta}Y^*\\
		e^{i\theta}Y&0
		\end{array}\right]\\
		&+\textbf{ber}\left[\begin{array}{cc}
	0&0\\
	0&2\texttt{Re}(e^{i\theta}R)
		\end{array}\right]\bigg)\\
			&\leq\frac{1}{2} \bigg(\left[\begin{array}{cc}
		2\alpha\textbf{ber}(\texttt{Re}(e^{i\theta}S))&\|e^{i\theta}X\|\\
		\|e^{-i\theta}X^*\|&0
		\end{array}\right]+\left[\begin{array}{cc}
		2(1-\alpha)\textbf{ber}(\texttt{Re}(e^{i\theta}S))&\|e^{-i\theta}Y^*\|\\
		\|e^{i\theta}Y\|&0
		\end{array}\right]\\
		&+\left[\begin{array}{cc}
		0&0\\
		0&2\textbf{ber}(\texttt{Re}(e^{i\theta}R))
		\end{array}\right]\bigg)~~~~\,\,\,\,(\mbox{by Theorem 2.1})\\
		&\leq\frac{1}{2} \bigg(\left[\begin{array}{cc}
		2\alpha\textbf{ber}(S)&\|X\|\\
		\|X\|&0
		\end{array}\right]+\left[\begin{array}{cc}
		2(1-\alpha)\textbf{ber}(S)&\|Y\|\\
		\|Y\|&0
		\end{array}\right]
		+2\textbf{ber}(R)
		\bigg)\\
		&=\frac{1}{2}\bigg[\alpha\textbf{ber}(S)+\sqrt{\alpha^2\textbf{ber}^2(S)+\|X\|^2}+(1-\alpha)\textbf{ber}(S)\\
		&+\sqrt{(1-\alpha)^2\textbf{ber}^2(S)+\|Y\|^2}+2\textbf{ber}(R)\bigg]
		\\
		&\leq\frac{1}{2}\bigg[\textbf{ber}(S)+2\textbf{ber}(R)+\sqrt{\alpha^2\textbf{ber}^2(S)+\|X\|^2}
		+\sqrt{(1-\alpha)^2\textbf{ber}^2(S)+\|Y\|^2}\bigg].
	\end{align*}
	Taking supremum over $\theta\in \mathbb{R}$, we get
		\begin{align*}
	&\textbf{ber}(T)\\&\leq \frac{1}{2}\textbf{ber}(S)+\textbf{ber}(R)+\frac{1}{2}\sqrt{\alpha^2\textbf{ber}^2(S)+\|X\|^2}+\frac{1}{2}\sqrt{(1-\alpha)^2\textbf{ber}^2(S)+\|Y\|^2}.
	\end{align*}
\end{proof}

Using similar argument as used in previous theorem, we have the following result.

\begin{theorem}
	Let
	$T=\left[\begin{array}{cc}
	S&X\\
	Y&R
	\end{array}\right]\in {\mathcal B}({\mathcal H_1\oplus\mathcal H_2})$.
	Then
	\begin{align}
	&\textbf{ber}(T)\nonumber\\&\leq \frac{1}{2}\textbf{ber}(R)+\textbf{ber}(S)+\frac{1}{2}\sqrt{\alpha^2\textbf{ber}^2(R)+\|Y\|^2}+\frac{1}{2}\sqrt{(1-\alpha)^2\textbf{ber}^2(R)+\|X\|^2}
	\end{align}
	for $0\leq \alpha\leq 1$.
\end{theorem}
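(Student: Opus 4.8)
The plan is to run the argument of the preceding theorem essentially verbatim, but with the roles of the two diagonal blocks interchanged: this time it is the block $R$ that I split across the off-diagonal entries, rather than $S$. By Lemma 2.1(c) it suffices to bound $\textbf{ber}(\texttt{Re}(e^{i\theta}T))$ independently of $\theta$ and then pass to the supremum. As in the previous proof I would first write
\[
\texttt{Re}(e^{i\theta}T)=\frac{1}{2}\left[\begin{array}{cc}2\texttt{Re}(e^{i\theta}S)&e^{i\theta}X+e^{-i\theta}Y^*\\ e^{i\theta}Y+e^{-i\theta}X^*&2\texttt{Re}(e^{i\theta}R)\end{array}\right].
\]

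Next I would decompose this self-adjoint matrix as $\texttt{Re}(e^{i\theta}T)=\tfrac{1}{2}(M_1+M_2+M_3)$, where
\[
M_1=\left[\begin{array}{cc}0&e^{-i\theta}Y^*\\ e^{i\theta}Y&2\alpha\texttt{Re}(e^{i\theta}R)\end{array}\right],\quad M_2=\left[\begin{array}{cc}0&e^{i\theta}X\\ e^{-i\theta}X^*&2(1-\alpha)\texttt{Re}(e^{i\theta}R)\end{array}\right],
\]
and $M_3=\left[\begin{array}{cc}2\texttt{Re}(e^{i\theta}S)&0\\ 0&0\end{array}\right]$; a direct check confirms that $M_1+M_2+M_3$ recovers the matrix above. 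Subadditivity of $\textbf{ber}$ then reduces the problem to bounding $\textbf{ber}(M_1)$, $\textbf{ber}(M_2)$ and $\textbf{ber}(M_3)$ individually.

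For $M_1$ and $M_2$ I would apply Theorem 2.1 exactly as in the previous proof, dominating each operator matrix by the scalar $2\times2$ matrix obtained by replacing the off-diagonal operators with their norms and $\texttt{Re}(e^{i\theta}R)$ with $\textbf{ber}(\texttt{Re}(e^{i\theta}R))$. Computing the larger (in modulus) eigenvalue of the Hermitian matrix $\left[\begin{array}{cc}0&b\\ b&2a\end{array}\right]$, namely $a+\sqrt{a^2+b^2}$, then yields
\[
\textbf{ber}(M_1)\leq \alpha\,\textbf{ber}(\texttt{Re}(e^{i\theta}R))+\sqrt{\alpha^2\textbf{ber}^2(\texttt{Re}(e^{i\theta}R))+\|Y\|^2},
\]
and the analogous bound for $M_2$ with $(1-\alpha)$ and $\|X\|$; for the diagonal piece, $\textbf{ber}(M_3)=2\textbf{ber}(\texttt{Re}(e^{i\theta}S))$ by Lemma 2.1(a). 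Finally I would use the elementary estimates $\textbf{ber}(\texttt{Re}(e^{i\theta}R))\leq\textbf{ber}(R)$ and $\textbf{ber}(\texttt{Re}(e^{i\theta}S))\leq\textbf{ber}(S)$, which follow from $\widetilde{\texttt{Re}(e^{i\theta}R)}(\lambda)=\texttt{Re}\big(e^{i\theta}\widetilde R(\lambda)\big)$ so that $|\widetilde{\texttt{Re}(e^{i\theta}R)}(\lambda)|\leq|\widetilde R(\lambda)|$, in order to replace the $\theta$-dependent quantities under the roots, collect the three bounds, and take the supremum over $\theta\in\mathbb{R}$. Combining the pieces gives precisely the asserted inequality.

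There is no genuine analytic difficulty here: once the decomposition is fixed, every step is routine and copies the previous theorem. The only point demanding care is the bookkeeping of the split. It is the $R$-block (not the $S$-block) that must be distributed, which is exactly what places $\textbf{ber}(R)$ under the two square roots and leaves $\textbf{ber}(S)$ undistributed, and one must pair the off-diagonal norms with the correct fractions. Assigning $Y$ to the $\alpha$-part and $X$ to the $(1-\alpha)$-part reproduces the statement verbatim; the opposite assignment gives the same bound with $\alpha$ replaced by $1-\alpha$, which is harmless since $\alpha$ ranges over all of $[0,1]$.
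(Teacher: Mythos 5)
Your proof is correct and follows exactly the route the paper intends: the paper gives no separate argument for this theorem, stating only that it follows by the ``similar argument'' of the preceding one, and your decomposition (splitting the $R$-block across the two off-diagonal pieces, bounding each piece via the scalar matrix of norms, computing the eigenvalue $a+\sqrt{a^2+b^2}$, and taking the supremum over $\theta$) is precisely that argument with the roles of $S$ and $R$ interchanged. The only cosmetic slip is writing $\textbf{ber}(M_3)=2\textbf{ber}(\texttt{Re}(e^{i\theta}S))$ where Lemma 2.1(a) only gives $\leq$, which is all you need anyway.
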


In the final part of the article we want to provide generalization of Berezin number of an operator. For our goal we need to the following inequalities, which obtain in :
\begin{align}
a^{\nu}b^{1-\nu}\leq \nu a+(1-\nu)b\leq\left(\nu a^r+(1-\nu)b^r\right)^{\frac{1}{r}},
\end{align}
and
\begin{align}
ab\leq\frac{a^{p}}{p}+\frac{b^{q}}{q}\leq(\frac{a^{pr}}{p}+\frac{b^{qr}}{q})^{\frac{1}{r}},
\end{align}
where $a,b\geq0, \nu\in[0,1], r\geq1$ and $p,q>1$ such that $\frac{1}{p}+\frac{1}{q}=1$.
\begin{proposition}
Let $T\in {\mathcal B}({\mathcal H})$. Then
\begin{align}
\textbf{ber}^{2r}(T)\leq\frac{1}{2}(\textbf{ber}^r(T^{2})+\|T\|^{2r})
\end{align}
for any $r\geq1$.
\end{proposition}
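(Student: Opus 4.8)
The plan is to prove the base case $r=1$, namely $\textbf{ber}^{2}(T)\le\frac{1}{2}(\textbf{ber}(T^{2})+\|T\|^{2})$, and then bootstrap to arbitrary $r\ge 1$ by a convexity (power-mean) argument. This mirrors the classical numerical radius estimate $w^{2}(T)\le\frac12(w(T^{2})+\|T\|^{2})$, transported to the reproducing-kernel setting.

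For the base case I would invoke Lemma 2.1(c), which reduces $\textbf{ber}(T)$ to a supremum over the self-adjoint real parts $B_{\theta}:=\texttt{Re}(e^{i\theta}T)$. Fixing $\theta\in\mathbb{R}$ and a normalized reproducing kernel $\hat{k}_{\lambda}$, the first step is the Cauchy--Schwarz inequality in the form
\[
\langle B_{\theta}\hat{k}_{\lambda},\hat{k}_{\lambda}\rangle^{2}\le\|B_{\theta}\hat{k}_{\lambda}\|^{2}=\langle B_{\theta}^{2}\hat{k}_{\lambda},\hat{k}_{\lambda}\rangle,
\]
valid because $B_{\theta}$ is self-adjoint and $\|\hat{k}_{\lambda}\|=1$. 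Then I would expand
\[
B_{\theta}^{2}=\tfrac12\,\texttt{Re}(e^{2i\theta}T^{2})+\tfrac14\,(TT^{*}+T^{*}T),
\]
so that $\langle B_{\theta}^{2}\hat{k}_{\lambda},\hat{k}_{\lambda}\rangle$ splits into a ``$T^{2}$ part'' and a ``norm part.''

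The two terms are bounded separately: the first obeys $\langle\texttt{Re}(e^{2i\theta}T^{2})\hat{k}_{\lambda},\hat{k}_{\lambda}\rangle\le|\langle T^{2}\hat{k}_{\lambda},\hat{k}_{\lambda}\rangle|\le\textbf{ber}(T^{2})$, while the second satisfies $\tfrac14(\|T^{*}\hat{k}_{\lambda}\|^{2}+\|T\hat{k}_{\lambda}\|^{2})\le\tfrac14(\|T^{*}\|^{2}+\|T\|^{2})=\tfrac12\|T\|^{2}$. Combining gives $\langle B_{\theta}\hat{k}_{\lambda},\hat{k}_{\lambda}\rangle^{2}\le\frac12(\textbf{ber}(T^{2})+\|T\|^{2})$ uniformly in $\theta$ and $\lambda$. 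Taking the supremum and using Lemma 2.1(c) once more to recognize that $\sup_{\theta,\lambda}\langle B_{\theta}\hat{k}_{\lambda},\hat{k}_{\lambda}\rangle^{2}=\textbf{ber}^{2}(T)$ (each $B_{\theta}$ being self-adjoint, so its symbol is real) yields the $r=1$ inequality.

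Finally, to reach arbitrary $r\ge 1$, I would raise the $r=1$ inequality to the power $r$ and apply the displayed scalar inequality preceding the proposition with $\nu=\tfrac12$, $a=\textbf{ber}(T^{2})$, $b=\|T\|^{2}$, which gives $\left(\frac{a+b}{2}\right)^{r}\le\frac{a^{r}+b^{r}}{2}$ from the convexity of $t\mapsto t^{r}$. This produces exactly $\textbf{ber}^{2r}(T)\le\frac12(\textbf{ber}^{r}(T^{2})+\|T\|^{2r})$. The only delicate point is the base case, where one must correctly expand $B_{\theta}^{2}$ and match the cross term $TT^{*}+T^{*}T$ to $\|T\|^{2}$ while controlling the diagonal term $\texttt{Re}(e^{2i\theta}T^{2})$ by $\textbf{ber}(T^{2})$; the passage to general $r$ is then routine.
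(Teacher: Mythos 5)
Your proof is correct, but it takes a genuinely different route from the paper. The paper works directly with a single reproducing kernel and Buzano's refinement of the Schwarz inequality, $|\langle a,e\rangle\langle e,b\rangle|\le\frac{1}{2}(\|a\|\|b\|+|\langle a,b\rangle|)$ for $\|e\|=1$, applied with $e=k_{\lambda}$, $a=Tk_{\lambda}$, $b=T^{*}k_{\lambda}$; this yields $|\langle Tk_{\lambda},k_{\lambda}\rangle|^{2}\le\frac{1}{2}(\|Tk_{\lambda}\|\|T^{*}k_{\lambda}\|+|\langle T^{2}k_{\lambda},k_{\lambda}\rangle|)$ in one stroke, after which the power-mean inequality $\frac{a+b}{2}\le\left(\frac{a^{r}+b^{r}}{2}\right)^{1/r}$ is applied \emph{pointwise} (before the supremum) and the bound is finished by $\|Tk_{\lambda}\|^{r}\|T^{*}k_{\lambda}\|^{r}\le\|T\|^{2r}$. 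You instead reprove the base case by the classical Kittaneh-style rotation argument: Lemma 2.1(c), Cauchy--Schwarz for the self-adjoint operator $B_{\theta}=\texttt{Re}(e^{i\theta}T)$, and the identity $B_{\theta}^{2}=\frac{1}{2}\texttt{Re}(e^{2i\theta}T^{2})+\frac{1}{4}(TT^{*}+T^{*}T)$, then bootstrap to general $r$ by convexity of $t\mapsto t^{r}$ \emph{after} taking suprema. Both arguments are sound and land on the same constant. The paper's Buzano route is shorter, avoids Lemma 2.1(c) entirely, and retains a sharper intermediate pointwise estimate $\frac{1}{2}(\|Tk_{\lambda}\|^{r}\|T^{*}k_{\lambda}\|^{r}+|\langle T^{2}k_{\lambda},k_{\lambda}\rangle|^{r})$; your route is more elementary (only Cauchy--Schwarz plus the rotation lemma already stated in the paper) and makes the reduction from general $r$ to $r=1$ transparent, at the cost of an operator identity that must be expanded correctly — which you do.
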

\begin{proof}
We need the following refinement of Schwarz's inequality:
\begin{align}
\|a\|\|b\|\geq|\langle a,b\rangle-\langle a,e\rangle\langle e,b\rangle|+|\langle a,e\rangle\langle e,b\rangle|\geq |\langle a,b\rangle|,
\end{align}
where $a,b,e$ are vectors in ${\mathcal H}$ and $\|e\|=1$.\\
From , we have
\begin{align}
\frac{1}{2}(\|a\|\|b\|+|\langle a,b\rangle|)\geq |\langle a,e\rangle\langle e,b\rangle|
\end{align}
By putting $e=k_{\lambda}$, $a=Tk_{\lambda}$ and $b=T^{*}k_{\lambda}$ in , we get
\begin{align*}
|\langle Tk_{\lambda},k_{\lambda}\rangle|^{2}&\leq \frac{1}{2}(\|Tk_{\lambda}\|\|T^*k_{\lambda}\|+|\langle T^2k_{\lambda},k_{\lambda}\rangle|)\\&
\leq \left(\frac{\|Tk_{\lambda}\|^r\|T^*k_{\lambda}\|^r+|\langle T^2k_{\lambda},k_{\lambda}\rangle|^r}{2}\right)^{\frac{1}{r}}.
\end{align*}
Hence
\begin{align*}
|\langle Tk_{\lambda},k_{\lambda}\rangle|^{2r}\leq\frac{1}{2}(\|Tk_{\lambda}\|^r\|T^*k_{\lambda}\|^r+|\langle T^2k_{\lambda},k_{\lambda}\rangle|^r).
\end{align*}
Taking the supremum over all unit vectors $k_{\lambda}$, we get the desired result.
\end{proof}
\begin{remark}
From inequality , we obviously have
\begin{align*}
\textbf{ber}^{2r}(T)\leq\frac{1}{2}(\textbf{ber}^r(T^{2})+\|T\|^{2r})&\leq\frac{1}{2}(\|T\|^{2r}+\|T\|^{2r})\\&
\leq \|T\|^{2r}.
\end{align*}
\end{remark}
In the next theorem, we obtain upper bound for powers $\textbf{ber}$.
\begin{theorem}
Let
$T\in {\mathcal B}({\mathcal H})$ and  $f$, $g$ be nonnegative  continuous  functions on $[0, \infty)$ satisfying the relation $f(t)g(t)=t\,\,(t\in [0, \infty))$. Then
 \begin{align}
\textbf{ber}^{2r}(T)\leq \frac{1}{2}(\|T\|^{2r}+\textbf{ber}(\frac{1}{p}f^{pr}(|T^{2}|)+\frac{1}{q}g^{qr}(|(T^2)^{*}|),
\end{align}
where $r\geq1, p\geq q>1$ with $\frac{1}{p}+\frac{1}{q}=1$ and $qr\geq2$.
\end{theorem}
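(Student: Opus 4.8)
The plan is to reproduce the opening of the preceding Proposition and then, rather than estimating $\langle T^{2}k_{\lambda},k_{\lambda}\rangle$ crudely by $\textbf{ber}(T^{2})$, to refine this term by means of the mixed Schwarz inequality (Lemma 2.3) together with the weighted Young inequality. First I would fix a normalized reproducing kernel $k_{\lambda}$ and insert $e=k_{\lambda}$, $a=Tk_{\lambda}$, $b=T^{*}k_{\lambda}$ into the refinement of Schwarz's inequality used in the Proposition. Because $\langle a,e\rangle\langle e,b\rangle=\langle Tk_{\lambda},k_{\lambda}\rangle^{2}$ and $\langle a,b\rangle=\langle T^{2}k_{\lambda},k_{\lambda}\rangle$, this gives
\[
|\langle Tk_{\lambda},k_{\lambda}\rangle|^{2}\leq\tfrac{1}{2}\bigl(\|Tk_{\lambda}\|\,\|T^{*}k_{\lambda}\|+|\langle T^{2}k_{\lambda},k_{\lambda}\rangle|\bigr),
\]
and applying the power-mean inequality (the middle-to-right part of the first scalar inequality with $\nu=\tfrac{1}{2}$) and then raising the resulting bound for $|\langle Tk_{\lambda},k_{\lambda}\rangle|^{2}$ to the $r$-th power yields
\[
|\langle Tk_{\lambda},k_{\lambda}\rangle|^{2r}\leq\tfrac{1}{2}\bigl(\|Tk_{\lambda}\|^{r}\|T^{*}k_{\lambda}\|^{r}+|\langle T^{2}k_{\lambda},k_{\lambda}\rangle|^{r}\bigr).
\]

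The heart of the argument is the treatment of the second summand. Applying Lemma 2.3 to the operator $T^{2}$ with $x=y=k_{\lambda}$ gives $|\langle T^{2}k_{\lambda},k_{\lambda}\rangle|^{r}\leq\langle f^{2}(|T^{2}|)k_{\lambda},k_{\lambda}\rangle^{r/2}\langle g^{2}(|(T^{2})^{*}|)k_{\lambda},k_{\lambda}\rangle^{r/2}$. Writing $u=\langle f^{2}(|T^{2}|)k_{\lambda},k_{\lambda}\rangle^{r/2}$ and $v=\langle g^{2}(|(T^{2})^{*}|)k_{\lambda},k_{\lambda}\rangle^{r/2}$ and feeding $uv$ into the scalar Young inequality $uv\leq\tfrac{u^{p}}{p}+\tfrac{v^{q}}{q}$, I obtain
\[
|\langle T^{2}k_{\lambda},k_{\lambda}\rangle|^{r}\leq\frac{1}{p}\langle f^{2}(|T^{2}|)k_{\lambda},k_{\lambda}\rangle^{pr/2}+\frac{1}{q}\langle g^{2}(|(T^{2})^{*}|)k_{\lambda},k_{\lambda}\rangle^{qr/2}.
\]
Then Lemma 2.2(a) moves each exponent inside the inner product: since $(f^{2})^{pr/2}=f^{pr}$ and $(g^{2})^{qr/2}=g^{qr}$, the right-hand side is dominated by $\langle(\tfrac{1}{p}f^{pr}(|T^{2}|)+\tfrac{1}{q}g^{qr}(|(T^{2})^{*}|))k_{\lambda},k_{\lambda}\rangle$, hence by $\textbf{ber}(\tfrac{1}{p}f^{pr}(|T^{2}|)+\tfrac{1}{q}g^{qr}(|(T^{2})^{*}|))$ since that operator is positive. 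Combining this with $\|Tk_{\lambda}\|^{r}\|T^{*}k_{\lambda}\|^{r}\leq\|T\|^{2r}$ and taking the supremum over $\lambda$ produces the claimed inequality.

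The only delicate point, and the place where the hypotheses are genuinely used, is the application of Lemma 2.2(a): it requires the exponents $pr/2$ and $qr/2$ to be at least $1$. The assumption $qr\geq2$ is precisely $qr/2\geq1$, and $p\geq q$ forces $pr\geq qr\geq2$, so both requirements hold simultaneously; the normalization $\|k_{\lambda}\|=1$ supplies the constraint $\|x\|\leq1$ needed in that lemma. Apart from this exponent bookkeeping every step is a direct substitution into an already-established inequality, so I do not anticipate any essential obstacle beyond keeping the chain of exponents consistent.
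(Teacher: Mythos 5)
Your proposal is correct and follows essentially the same route as the paper: the refined Schwarz inequality from the preceding proposition combined with the power-mean inequality to get $|\langle Tk_{\lambda},k_{\lambda}\rangle|^{2r}\leq\tfrac{1}{2}\left(\|Tk_{\lambda}\|^{r}\|T^{*}k_{\lambda}\|^{r}+|\langle T^{2}k_{\lambda},k_{\lambda}\rangle|^{r}\right)$, then Lemma 2.3 applied to $T^{2}$, Young's inequality, and Lemma 2.2(a) on the second summand. Your explicit verification that $qr\geq 2$ and $p\geq q$ are exactly what Lemma 2.2(a) needs is a point the paper leaves implicit, and your write-up avoids some typographical slips in the paper's own chain of inequalities.
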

\begin{proof}
Let $k_{\lambda}\in {\mathcal H}$ be unit vector. Then from Lemma , we have
\begin{align*}
|\langle T^{2}k_{\lambda},k_{\lambda}\rangle|^{r}&\leq \|f(|T^{2}|)k_{\lambda}\|^{r}\|g(|(T^{2})^{*}|)k_{\lambda}\|^{r}\\&
=\langle f(|T^{2}|)k_{\lambda}, k_{\lambda}\rangle^{\frac{r}{2}}\langle g(|(T^{2})^{*}|)k_{\lambda}, k_{\lambda}\rangle^{\frac{r}{2}}\\&
\leq\frac{1}{p}\langle f(|T^{2}|)k_{\lambda}, k_{\lambda}\rangle^{\frac{rp}{2}}+\frac{1}{q}\langle g(|(T^{2})^{*}|)k_{\lambda}, k_{\lambda}\rangle^{\frac{rq}{2}}\\&
\leq \frac{1}{p}\langle f(|T^{rp}|)k_{\lambda}, k_{\lambda}\rangle+\frac{1}{q}\langle g(|(T^{rq})^{*}|)k_{\lambda}, k_{\lambda}\rangle\\&
=\langle \frac{1}{p}f(|T^{rp}|)+\frac{1}{q}g(|(T^{rq})^{*}|)k_{\lambda},k_{\lambda}\rangle.
\end{align*}
From proof of Theorem , we have
\begin{align*}
|\langle Tk_{\lambda},k_{\lambda}\rangle|^{2r}\leq\frac{1}{2}\left(\|Tk_{\lambda}\|^r+\|T^{*}k_{\lambda}\|^{r}+\langle \frac{1}{p}f(|T^{rp}|)+\frac{1}{q}g(|(T^{rq})^{*}|)k_{\lambda},k_{\lambda}\rangle\right).
\end{align*}
By taking supremun over unit vector $k_{\lambda}$, we get desired inequality.
\end{proof}
\begin{theorem}
Let $T\in {\mathcal B}({\mathcal H})$. Then for any $\nu\in[0,1]$ and $t\in \mathbb{R}$,
\begin{align*}
\|T\|^{2}\leq((1-\nu)^{2}+\nu^{2})\textbf{ber}^{2}(T)+\nu\|T-tI\|^{2}+(1-\nu)\|T-itI\|^{2}.
\end{align*}
\end{theorem}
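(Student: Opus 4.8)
The plan is to reduce the operator inequality to an elementary scalar estimate by testing $T$ against a single normalized reproducing kernel and exploiting the fact that subtracting a scalar multiple of the identity shifts only the diagonal value $\mu:=\langle T\hat k_{\lambda},\hat k_{\lambda}\rangle$, while leaving the rest of $T\hat k_{\lambda}$ untouched. So first I would fix $\lambda\in\Omega$ and expand the two shifted norms; since $\|\hat k_{\lambda}\|=1$,
\[
\|(T-tI)\hat k_{\lambda}\|^{2}=\|T\hat k_{\lambda}\|^{2}-2t\,\texttt{Re}\,\mu+t^{2},\qquad \|(T-itI)\hat k_{\lambda}\|^{2}=\|T\hat k_{\lambda}\|^{2}-2t\,\texttt{Im}\,\mu+t^{2}.
\]
Forming the convex combination with weights $\nu$ and $1-\nu$ and solving for $\|T\hat k_{\lambda}\|^{2}$ yields the exact identity
\[
\|T\hat k_{\lambda}\|^{2}=\nu\|(T-tI)\hat k_{\lambda}\|^{2}+(1-\nu)\|(T-itI)\hat k_{\lambda}\|^{2}+2t\big(\nu\,\texttt{Re}\,\mu+(1-\nu)\,\texttt{Im}\,\mu\big)-t^{2},
\]
which isolates all of the $t$-dependence into one scalar expression that is concave in $t$.

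Next I would complete the square in that cross term via $2t\beta-t^{2}=\beta^{2}-(t-\beta)^{2}\le\beta^{2}$ with $\beta=\nu\,\texttt{Re}\,\mu+(1-\nu)\,\texttt{Im}\,\mu$, and then apply the Cauchy--Schwarz inequality to the two-dimensional vectors $(\nu,1-\nu)$ and $(\texttt{Re}\,\mu,\texttt{Im}\,\mu)$, namely
\[
\big(\nu\,\texttt{Re}\,\mu+(1-\nu)\,\texttt{Im}\,\mu\big)^{2}\le\big(\nu^{2}+(1-\nu)^{2}\big)\big((\texttt{Re}\,\mu)^{2}+(\texttt{Im}\,\mu)^{2}\big)=\big(\nu^{2}+(1-\nu)^{2}\big)|\mu|^{2}.
\]
Combining the last two displays gives the pointwise bound
\[
\|T\hat k_{\lambda}\|^{2}\le\big(\nu^{2}+(1-\nu)^{2}\big)|\mu|^{2}+\nu\|(T-tI)\hat k_{\lambda}\|^{2}+(1-\nu)\|(T-itI)\hat k_{\lambda}\|^{2}.
\]
Finally I would majorize $|\mu|\le\textbf{ber}(T)$, together with $\|(T-tI)\hat k_{\lambda}\|\le\|T-tI\|$ and $\|(T-itI)\hat k_{\lambda}\|\le\|T-itI\|$, and pass to the supremum over all normalized reproducing kernels to obtain the asserted estimate with $\|T\|^{2}$ on the left.

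The step I expect to be the crux is producing the sharp coefficient $\nu^{2}+(1-\nu)^{2}$ rather than the trivial constant $1$: a crude splitting of $\|T\hat k_{\lambda}\|^{2}$ by the triangle inequality would leave coefficient $1$ on the Berezin term, and it is precisely the \emph{exact} identity above combined with the completing-the-square and Cauchy--Schwarz maneuver that extracts the strictly smaller constant $1-2\nu(1-\nu)$. A secondary point requiring care is the final passage from the reproducing-kernel supremum to the operator norm $\|T\|$ on the left-hand side; this is where the functional-Hilbert-space structure of $\mathcal H=\mathcal H(\Omega)$ must be used, and it should be recorded explicitly rather than taken for granted.
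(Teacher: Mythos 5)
Your argument is essentially the paper's argument. The paper quotes (from Dragomir) the auxiliary inequality
\[
\bigl(\nu\|tb-a\|^{2}+(1-\nu)\|itb-a\|^{2}\bigr)\|b\|^{2}\;\geq\;\|a\|^{2}\|b\|^{2}-\bigl((1-\nu)\texttt{Im}\langle a,b\rangle+\nu\,\texttt{Re}\langle a,b\rangle\bigr)^{2},
\]
specializes it to $a=Tk_{\lambda}$, $b=k_{\lambda}$, and then applies exactly the Cauchy--Schwarz step on $(\nu,1-\nu)$ versus $(\texttt{Re}\,\mu,\texttt{Im}\,\mu)$ that you use, before taking the supremum. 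The only genuine difference is that you reprove this auxiliary inequality from scratch: your exact identity for the convex combination of the two shifted norms, followed by $2t\beta-t^{2}\le\beta^{2}$, is precisely that lemma in the case $\|b\|=1$. That makes your write-up self-contained where the paper relies on a citation, which is a modest but real improvement.

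The caveat you raise at the end is not merely a point of bookkeeping --- it is a genuine gap, and it is present in the paper's own proof as well. The pointwise bound holds for every unit vector, but the two sides of the final inequality pull in opposite directions when you take suprema: to obtain $\|T\|^{2}$ on the left you must range over \emph{all} unit vectors, in which case $|\mu|$ is only controlled by the numerical radius $w(T)$, not by $\textbf{ber}(T)$; whereas if you restrict to the normalized reproducing kernels $\hat k_{\lambda}$ (so that $|\mu|\le\textbf{ber}(T)$), the left-hand supremum is only $\sup_{\lambda}\|T\hat k_{\lambda}\|^{2}$, which can be strictly smaller than $\|T\|^{2}$ since the kernels $\{\hat k_{\lambda}\}$ need not be a norming set for the operator norm. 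So what the argument actually proves is either the numerical-radius version of the statement, or the Berezin version with $\sup_{\lambda}\|T\hat k_{\lambda}\|^{2}$ in place of $\|T\|^{2}$; the theorem as stated requires an additional hypothesis (or justification) that the reproducing kernels are norming for $T$. You were right to flag this step as the one that "should be recorded explicitly" --- it cannot in fact be taken for granted, and neither your proposal nor the paper supplies it.
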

\begin{proof}
We use the following inequality, which obtained in 
\begin{align*}
&(\nu\|tb-a\|^{2}+(1-\nu)\|itb-a\|^{2})\|b\|^{2}\\&\geq\|a\|^{2}\|b\|^{2}-((1-\nu)Im\langle a,b\rangle+\nu Re\langle a,b\rangle)^{2} (\geq0)
\end{align*}
for any $a,b\in \mathcal H$, $\nu\in[0,1]$ and $t\in \mathbb{R}$.
So,
\begin{align*}
\|a\|^{2}\|b\|^{2}&\leq((1-\nu)Im\langle a,b\rangle+\nu Re\langle a,b\rangle)^{2}\\&+(\nu\|tb-a\|^{2}+(1-\nu)\|itb-a\|^{2})\|b\|^{2}\\&
\leq((1-\nu)^{2}+\nu^{2})|\langle a,b\rangle|^{2}+(\nu\|tb-a\|^{2}+(1-\nu)\|itb-a\|^{2})\|b\|^{2}.
\end{align*}
Choosing $a=Tk_{\lambda}, b=k_{\lambda}$  with $\|k_{\lambda}\|=1$, we get
\begin{align*}
\|Tk_{\lambda}\|^{2}\|k_{\lambda}\|^{2}&\leq((1-\nu)Im\langle Tk_{\lambda},k_{\lambda}\rangle+\nu Re\langle Tk_{\lambda},k_{\lambda}\rangle)^{2}\\&+(\nu\|tk_{\lambda}-Tk_{\lambda}\|^{2}+(1-\nu)\|itk_{\lambda}-Tk_{\lambda}\|^{2})\|k_{\lambda}\|^{2}\\&
\leq((1-\nu)^{2}+\nu^{2})|\langle Tk_{\lambda},k_{\lambda}\rangle|^{2}\\&+(\nu\|tk_{\lambda}-Tk_{\lambda}\|^{2}+(1-\nu)\|itk_{\lambda}-Tk_{\lambda}\|^{2})\|k_{\lambda}\|^{2}.
\end{align*}
Taking the supremum over all unit vectors $k_{\lambda}$, we get the desired result.
\end{proof}




\bibliographystyle{amsplain}

\end{document}